\documentclass[11pt]{article}
\usepackage{hyperref}
\usepackage{amsbsy}
\usepackage[T1]{fontenc}
\usepackage{amsfonts,eufrak}
\usepackage[english]{babel}
\usepackage{a4wide,times}

\usepackage[latin1]{inputenc}
\usepackage{amssymb}
\usepackage{epsfig}
\usepackage{amsbsy}
\usepackage{verbatim}
\usepackage{color}
\usepackage{mathrsfs}
\usepackage{graphicx}
\usepackage{epstopdf}
\usepackage{makeidx}
\usepackage{stmaryrd}
\usepackage{amsmath,amsbsy}
\usepackage{mathabx}
\usepackage{amsthm}
\usepackage{xargs}
\usepackage[capitalise]{cleveref}
\usepackage[prependcaption]{todonotes}
\newcommandx{\fab}[2][1=]{\todo[inline, author={Fab}, linecolor=green,backgroundcolor=red!25,bordercolor=red,#1]{#2}}
\newcommandx{\fabnote}[2][1=]{\todo[author={Fab}, linecolor=red,backgroundcolor=red!25,bordercolor=red,#1]{#2}}

\theoremstyle{plain}
\newtheorem{thm}{Theorem}[section]
\newtheorem{cor}[thm]{Corollary}
\newtheorem{lem}[thm]{Lemma}
\newtheorem{prop}[thm]{Proposition}

\theoremstyle{definition}
\newtheorem{defn}{Definition}[section]
\usepackage{dsfont}

\theoremstyle{remark}
\newtheorem{rem}{\bf Remark}[section]
\theoremstyle{remark}

\newtheorem{com*}{\bf Comment}

\usepackage{fancyhdr}

\makeatletter
\def \newequation#1#2{
   \@definecounter{#1}
   \@namedef{the#1}{\hbox{#2}}
   \@namedef{#1}{$$\refstepcounter{#1}}
   \@namedef{end#1}{
      \eqno \csname the#1\endcsname $$\global\@ignoretrue
      }
}
\makeatother
\newequation{E1}{($E_{b,\sigma,W}$)}
   
\makeatletter
\def \newequation#1#2{
   \@definecounter{#1}
   \@namedef{the#1}{\hbox{#2}}
   \@namedef{#1}{$$\refstepcounter{#1}}
   \@namedef{end#1}{
      \eqno \csname the#1\endcsname $$\global\@ignoretrue
      }
   }
\makeatother
\newequation{hyp3}{($\mathcal{H}_{b,\sigma}$)}

\makeatletter
\def \newequation#1#2{
   \@definecounter{#1}
   \@namedef{the#1}{\hbox{#2}}
   \@namedef{#1}{$$\refstepcounter{#1}}
   \@namedef{end#1}{
      \eqno \csname the#1\endcsname $$\global\@ignoretrue
      }
   }
\makeatother
\newequation{shleg}{(ShLeg)}

\makeatletter
\def \newequation#1#2{
   \@definecounter{#1}
   \@namedef{the#1}{\hbox{#2}}
   \@namedef{#1}{$$\refstepcounter{#1}}
   \@namedef{end#1}{
      \eqno \csname the#1\endcsname $$\global\@ignoretrue
      }
   }
\makeatother
\newequation{kl}{(KL)}

\makeatletter
\def \newequation#1#2{
   \@definecounter{#1}
   \@namedef{the#1}{\hbox{#2}}
   \@namedef{#1}{$$\refstepcounter{#1}}
   \@namedef{end#1}{
      \eqno \csname the#1\endcsname $$\global\@ignoretrue
      }
   }
\makeatother
\newequation{haar}{(Haar)}

\def\R{{\mathbb{R}}}
\def\ER{{\mathbb{R}}}

\def\N{{\mathbb{N}}}
\def\E{{\mathbb{E}}}
\def\ES{{\mathbb{E}}}

\def\B{{\cal B}}
\def\P{{\mathbb{P}}}
\def\PE{{\mathbb{P}}}

\def\ent{K}

\def\b{\beta}

\def\d{\delta}

\def\ve{\varepsilon}

\def\tcg{\textcolor{purple}}

\def\diamd{\mathfrak{d}_d}
\def\fab{\textcolor{magenta}}

\author{Gilles Pag\`es~\thanks{Sorbonne Universit\'e,  Laboratoire de Probabilit\'es, Statistique et Mod\'elisation, UMR~8001, case 158, 4, pl. Jussieu, F-75252 Paris Cedex 5, France. E-mail: \texttt{gilles.pages@sorbonne-universite.fr}}  $\;$and 
Fabien Panloup~\thanks{Université d'Angers, CNRS, LAREMA, SFR MATHSTIC, F-49000 Angers, France. E-mail: \texttt{fabien.panloup@univ-angers.fr}}
}
\title{\bf Kolmogorov-Smirnov  distance and discrepancies {\em versus} Wasserstein distances}

\begin{document}
\maketitle

\begin{abstract}
We establish inequalities that compare the $p$-Wasserstein distance to distances which are built as {suprema} of box measures. More precisely, when the measures are supported on $[0,1]^d$, we obtain sharp upper-bounds of the $p$-Wasserstein distance by {(powers of)} the {(uniform)}  discrepancy. As an application, we retrieve the Pro\"inov Theorem.  When the {two distributions}  are supported {by the whole} $\ER^d$, {their} $p$-Wasserstein distance is upper bounded  {by the product of a (power of) their  Kolmogorov{--}Smirnov ($K$--$S$) distance with the sum of their $p$-moments. Reverse inequalities are established when one of the two distributions has a density, depending on its  ${\cal L}^s$-integrability with respect to the Lebesgue measure for some $s>1$}. 
\end{abstract}

{{\footnotesize \textit{Mathematics Subject Classification:} Primary: 60E15, 60F25  Secondary 65C05, 11K38, 65D32.}

{\footnotesize \textit{Keywords:} Wasserstein distance ; Kolmogorov-Smirnov distance ; star discrepancy ; uniform discrepancy.}

\section{Introduction}
 {For a given norm $|\,.\,|$ on $\ER^d$, the $p$-Wasserstein distance is defined  {when $p\ge 1$} by: for all $\mu$, $\nu\in{\cal P}_p(\ER^d)$, the space of probability distributions on ${\mathcal B}or(\ER^d)$ (Borel sets of $\ER^d$) having (at least) $p$-finite moments, 
$$
{\cal W}_p^{|\,.\,|}(\mu,\nu)=\inf \left\{ \left[\ES \,|X-Y|^p\right]^\frac{1}{p}, \PE_X=\mu,\PE_Y=\nu\right\},
$$
where $\PE_X$ and $\PE_Y$ denote the distributions of $X$ and $Y$ respectively. In the sequel, we will only write ${\cal W}_p$ {(and assume throughout the paper that $p\ge1$)}.
It is well-known that ${\cal W}_p$ metrizes  weak convergence {with convergence of the $p$-moments} on ${\cal P}_p(\ER^d)$ (see $e.g.$ \cite{Villani} for details). }The $p$-Wasserstein distance is now widely used in probabilistic and statistical applications. In statistics, this distance usually produces a robust alternative to Kullback-Leibler divergence taking into account the underlying metric structure. In probability theory, the Wasserstein distance is also widely used for quantifying the rate of convergence to equilibrium or analyzing the robustness of stochastic algorithms.\\

{In this paper we establish inequalities that compare {the $p$-Wasserstein distance to} the Kolmogorov-Smirnov ($K$--$S$) distance and its avatars on the state space $[0,1]^d$, usually called \textit{discrepancies}: for some probability distributions $\mu$ and $\nu$ on ${\mathcal B}or(\ER^d)$, we denote by $D^\star$ the distance defined by
\[
D^{\star}(\mu,\nu) =Ê\sup_{x\in \R^d} \big| \mu\big(]\!]-\infty,x]\!]\big) -  \nu\big((]\!]-\infty,x]\!]\big)\big|,
\]
where $]\!]-\infty,x]\!]=\prod_{i=1}^d (-\infty, x^i]$.
When both distributions of interest are supported by unit hypercubes $[0,1]^d$, the $K$--$S$ distance is called the \textit{star discrepancy} (which explains our notation). We will also consider the uniform discrepancy $D^\infty$ between $\mu$ and $\nu$ defined by
\[
D^{\infty}(\mu,\nu) =Ê\sup_{x,\, y\in \R^d} \big| \mu\big([\![x,y]\!]\big) -  \nu\big([\![x,y]\!]\big)\big|.
\]
where $[\![x,y]\!]=\prod_{i=1}^d [x^i, y^i]$ when $x^i\le y^i$ for every $i\in\{1,\ldots, d\}$ and $[\![x,y]\!]=\emptyset$ otherwise. Discrepancy is an important setting, closely related with Quasi-Monte Carlo method (and optimal quantization theory, see Section 3.3 of \cite{luschgy-pages}) where the empirical measure(s) associated to an $n$-tuple or a sequence of $[0,1]^d$-valued vectors is  used to approximate the uniform distribution ${\cal U}([0,1]^d)$ in order to replace sequences of pseudo-random numbers for the computation of integrals or expectations in Numerical Probability (see~\cite{Nied1992,GilPag2026}).  In these inequalities, special attention is paid to the constant to challenge specific  results from QMC theory like {Pro\"inov}'s Theorem when one of the two distributions is the empirical measure  associated to an $n$-tuple.} As well, $K$--$S$-distance, is certainly connected with the 
non parametric goodness of fit Kolmogorov-Smirnov test   which is devoted to testing equality between two distributions, one being known or not (see \emph{e.g.}~\cite[Sec 16.2]{lehmann-romano}).

The objective is thus to provide precise estimates between ${\cal W}_p$ and $D^\star$ or $D^\infty$. Before stating our results, let us remark that the topologies induced by these distances are slightly different. While ${\cal W}_p$ is a metric for  the weak convergence in ${\cal P}_p(\ER^d)$ with convergence of $p$-moments, $D^\star$ and $D^\infty$ apply without conditions on the moments but their induced topology is finer than the weak convergence topology as illustrated by the following counterexample: if 
\begin{equation}\label{eq:counterexampleb}
\nu_n = \frac 12 \big(\delta_{\frac 12} +\delta_{\frac 12(1+\frac 1n)}\big),\; n\ge 1, \quad\mbox{ and }\quad\nu = \delta_{\frac 12}
\end{equation}
then $\nu_n $ weakly converges toward $\nu$ but $D^\star (\nu_n,\nu){= D^\infty (\nu_n,\nu)}=\frac 12$ for every $n\ge 1$.
Consequently, controlling discrepancies by Wasserstein distances will require an absolute continuity assumption  on one of the two distributions under consideration. Conversely,
we will need  some additional moment assumptions when controlling the Wasserstein distance by $D^\star$ or $D^\infty$.
 
\smallskip
 \noindent \textbf{Contributions and plan of the paper.} We first focus on the control of the Wasserstein distance by discrepancies (or $K$--$S$-distance). For this part, we will rely on the  inspiring papers by~\cite{Dereich13} and~\cite{FG} which }{establish} {universal upper-bounds for the  Wasserstein distances  based on a telescopic splitting of the }{distributions}. Section~\ref{sec:2} is devoted to upper-bounding  the Wasserstein distance ${\cal W}_p$ by the uniform discrepancy for  $[0,1]^d$-supported  distributions with a special attention paid to the values of the semi-universal constants depending on $p$ and the dimension $d$. In view of the optimization of the inequalities, we provide a  variant of  the estimates of~\cite{Dereich13} and~\cite{FG} (see Inequality \eqref{eq:FG3} of~\cref{prop:lemmefondaFG}) based on a slight modification of the coupling scheme proposed in~\cite{Dereich13}. These estimates allow us to state our first estimate in~\cref{prop:WvsDisc} for $[0,1]^d$-supported distributions. At first reading, it can be  summed up as follows: for a given norm $|\,.\,|$ on $\ER^d$ and a given $p\ge 1$, a constant $K_{p,d}$ (which is made explicit in the result) exists such that  
 \[
{\cal W}_p(\mu, \nu) \le K_{p,d} \big(D^{\infty}(\mu,\nu)\big)^{\frac 1p\wedge \frac 1d}.
\]
Owing to an optimization strategy, the constant $K_{p,d}$ is then refined in~\cref{prop:refinement} in the special case $p=1$. Extending to $D^\star$  with the help of the {standard} inequality $D^\infty\le 2^d D^\star$ (see \eqref{eq:discbound} for background), this result allows to retrieve a celebrated inequality from Quasi-Monte-Carlo theory with some slightly larger but more universal constants (see~\cref{sec:proinov} and~\cref{rem:universality} for details) a.k.a.  Pro\"inov's theorem (see~\cite{proinov1988}).

 In~\cref{subsec:3.1} , we extend the bounds to the whole space $\ER^d$ and obtain the following {typical } bound  when $\mu$ and $\nu$ have finite moments or order $q>p$ (see~\cref{prop:vitessesur Rd} for a precise setting)
 \[
{\cal W}_p (\mu,\nu)\le K_{p,d,q} D^\infty(\mu,\nu)^{\frac 1d\wedge (\frac{1}{p}-\frac 1q)}.
\]
 With the inequality $D^\infty\le 2^d D^\star$ (which also holds on $\ER^d$), the above bound also holds with respect to the $K$--$S$-distance $D^\star$.\\
 
 Finally, we consider the reverse problem in~\cref{subsec:KSvsW_1unbound}, $i.e.$: bounding $D^\infty$ or $D^\star$ by the Wasserstein distance. In~\cref{thm:boundDbyW}, we show that if $\mu$ or $\nu$ has a density $g$ \emph{w.r.t.} the Lebesgue measure $\lambda_d$ on $\ER^d$,  the following type of result holds: 
$$D^\star(\mu,\nu)\le K_{r,d}{\cal W}^{\ell^\infty}_1(\mu,\nu)^{\frac{d}{r+d}}, $$
where $r>1$ depends on the moments of $g$ and $r=1$ if $g$ is bounded. This bounded case has been already proved in~\cite{GauLi} but with {non}-explicit constants. This reverse inequality is only written for ${\cal W}_1$-distance since it is based on the dual Kantorovich-Rubinstein representation but certainly extends to ${\cal W}_p$ since ${\cal W}_1\le {\cal W}_p$.

\section{Discrepancies for $[0,1]^d$-supported distributions}\label{sec:2}
{As mentioned in the introduction, we first consider $[0,1]^d$-supported distributions and will investigate the more general case of non-compactly supported probability measures in Section~\ref{sec:3}.}

\subsection{Definitions, notation and a technical lemma}
{We define the partial order on $\ER^d$ as follows:} for $d\! \in \N$, $x= (x^1,\ldots,x^d)$, $y= (y^1,\ldots,y^d)\!\in \ER^d$,
\[
x \preceq y \quad \mbox{ if }\quad x^i \le y^i \mbox{ for every $i\!\in \{1, \ldots,d\}$.}
\]
We can define the closed and semi-open {\em boxes} as follows: when  $x \preceq y$,
\begin{align*}
[\![x,y]\!] = \{u\in [0,1]^d: x^i \le  u^i \le y^i, \, i=1,\ldots,d\}= \prod_{i=1}^d [x^i, y^i]\\
]\!]x,y]\!] = \{u\in [0,1]^d: x^i <  u^i \le y^i, \, i=1,\ldots,d\}= \prod_{i=1}^d (x^i, y^i].
\end{align*}
and otherwise ({\em i.e.} if {$x\npreceq y$}), $[\![x,y]\!] = \varnothing$. 
Note that $[\![x,y]\!]$ is also empty whenever $x^i= y^i$ for some index $i$.\\

\noindent {When  $\mu$ and $\nu$ are two probability measures on $([0,1]^d, {\cal B}or([0,1]^d),\lambda_d)$, the {\em uniform} and {\em star} discrepancy  between $\mu$ and $\nu$ (introduced in the first section) take the form:}
\[
D^{\infty}(\mu,\nu) =Ê\sup_{x,\, y\in [0,1]^d} \big| \mu\big([\![x,y]\!]\big) -  \nu\big([\![x,y]\!]\big)\big|.
\]
and
\[
D^{\star}(\mu,\nu) =Ê\sup_{x\in [0,1]^d} \big| \mu\big([\![0,x]\!]\big) -  \nu\big([\![0,x]\!]\big)\big|.
\]
It is classical background (see {\em e.g}~\cite{Nied1992}) that both $D^\infty$ and $D^\star$ are  $[0,1]$-valued strongly equivalent distances on the set of probability measures on $[0,1]^d$ since
\begin{equation}\label{eq:discbound}
D^{\star}\le D^{\infty}\le 2^d D^{\star}\tcg{,} 
\end{equation}
(see {\em e.g.}~\cite{Nied1992})
but whose {{ induced topology is not that   of weak convergence of distributions on $[0,1]^d$}, as emphasized in \eqref{eq:counterexampleb}}. However { if the generalized c.d.f of $\mu$ defined by $F_{\nu}(x)= \nu([\![0,x]\!])$ is continuous} then 
\begin{equation}\label{eq:contre-exemple} 
\nu_n\stackrel{w}{\longrightarrow}\nu \quad \mbox{ if and only if } \quad {D^\star}(\nu_n,\nu)\to 0 \quad \mbox{ as }\quad n\to +\infty.
\end{equation}
The continuity of $F_\nu$ is equivalent  to the fact that, if $(e^i)_{i=1:d}$ denotes the canonical basis of $\R^d$
\[
\forall\, x\!\in [0,1]^d, \; \forall\, i\!\in \{1,\ldots,d\},\quad \nu\big(x+(e^i)^\perp\big)=0
\]
where $(e^i)^\perp:=\{y\!\in \R^d : y^i=0\}$. Note that for convenience we may extend any measure on $([0,1]^d, {\cal B}or([0,1]^d),\lambda_d)$ into a measure on $(\R^d, {\cal B}or(\R^d),\lambda_d)$ by setting $\nu(A)= \nu(A\cap [0,1]^d)$. 

The particular case where  $\nu$ has a continuous c.d.f., especially when $\nu= {\cal U}([0,1]^d)$,  and $\mu$ is the empirical measure {of } a random or deterministic  $n$-tuple whose components are $[0,1]^d$-valued, has been extensively investigated since the 1950s motivated by the so-called Quasi-Monte Carlo method (QMC, see~\cite{KuiNied, Nied1992}).

\medskip This suggests and justifies to compare in a general framework discrepancies  and Wasserstein distances ${\cal W}_p$, $1\le p<+\infty$  in a strong sense. To be more precise we will upper-bound these Wasserstein distances without  any a priori restrictions on the distributions beyond the existence of finite $p$-moments  whereas, for the reverse bounds, we will assume that (at least) one of the two distributions is absolutely continuous {(\textit{w.r.t.} the Lebesgue measure)} to avoid the above counterexample~\eqref{eq:counterexampleb}.

First we need {the following technical lemma whose proof is postponed to \cref{proof:lemma:2point1}}. 
\begin{lem}\label{lem:tech1}$(a)$ Let $\mu$ and $\nu$ two probability measures on $([0,1]^d, {\cal B}or([0,1]^d),\lambda_d)$. Then
\[
D^{\infty}(\mu,\nu) \ge  \sup_{x,\, y\in [0,1]^d} \big| \mu\big(]\!]x,y]\!]\big) -  \nu\big(]\!]x,y]\!]\big)\big|.
\]
$(b)$ If furthermore, $\mu\big((0,1]^d\big)= \nu\big((0,1]^d\big) =1$, then 
\[
D^{\infty}(\mu,\nu) = \sup_{x,\, y\in [0,1]^d} \big| \mu\big(]\!]x,y]\!]\big) -  \nu\big(]\!]x,y]\!]\big)\big|.
\]
$(c)$ Without the additional assumption of $(b)$, we have:
\[
D^{\infty}(\mu,\nu) = \sup_{x,\, y\in [0,1]^d, x\preceq y} \big| \mu\big(]\![x,y]\!]\big) -  \nu\big(]\![x,y]\!]\big)\big|,
\]
where $]\![x,y]\!]$ is defined by 
$$]\![x,y]\!]=\begin{cases}
]\!]x,y]\!]\textnormal{ if $x^i>0,\,\forall \, i\in\{1,\ldots,d\}$}\\
 \left\{u\in[0,1]^d, x^i<u^i\le y^i \textnormal{ if  $x^i>0$}, 0\le u^i\le  y^i \textnormal{ if  $x^i=0$}\right\}\textnormal{ otherwise.}
 \end{cases}
 $$

\end{lem}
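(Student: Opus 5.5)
The idea is to write every semi-open box as a monotone limit of closed boxes, and then pass to the limit using $\sigma$-continuity of the finite measures $\mu$ and $\nu$.

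For $(a)$, fix $x\preceq y$ in $[0,1]^d$; if $x\npreceq y$ the box $]\!]x,y]\!]$ is empty and there is nothing to prove. Set $x_n=(x+\tfrac1n\mathbf 1)\wedge y$ componentwise. Then the closed boxes $[\![x_n,y]\!]$ increase to $\bigcup_n[\![x_n,y]\!]=]\!]x,y]\!]$. In coordinates where $x^i=y^i$ both sides are empty, so these are harmless. Continuity from below applied to $\mu$ and $\nu$ gives
\[
\mu(]\!]x,y]\!])-\nu(]\!]x,y]\!])=\lim_n\big(\mu([\![x_n,y]\!])-\nu([\![x_n,y]\!])\big).
\]
Each term in the limit is bounded in absolute value by $D^\infty(\mu,\nu)$. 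Taking the supremum over $x,y$ yields $(a)$.

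For $(c)$, apply the same construction coordinatewise. In coordinates with $x^i>0$ use $x_n^i=x^i+\tfrac1n$ (capped at $y^i$), and in coordinates with $x^i=0$ keep $x_n^i=0$. The closed boxes $[\![x_n,y]\!]$ then increase to $]\![x,y]\!]$, so the same limit argument gives "$\ge$". For "$\le$", start from a closed box $[\![x,y]\!]$ with $x\preceq y$. If $x^i=0$, that coordinate of $[\![x,y]\!]$ already matches the corresponding coordinate of $]\![x,y]\!]$. If $x^i>0$, replace $x^i$ by $x^i-\varepsilon_n$ with $\varepsilon_n\downarrow0$. The sets $]\![x-\varepsilon_n e,y]\!]$, where $e$ is the indicator vector of $\{i:x^i>0\}$, are of the allowed form for $n$ large, since then the shifted coordinates stay positive. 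They decrease to $[\![x,y]\!]$. Continuity from above (the measures are finite) gives
\[
\big|\mu([\![x,y]\!])-\nu([\![x,y]\!])\big|\le \sup\big|\mu(]\![\cdot,\cdot]\!])-\nu(]\![\cdot,\cdot]\!])\big|,
\]
and taking the supremum over closed boxes gives equality. One point needs checking: every box $]\![x_n,y]\!]$ used here must have a lower corner whose positive coordinates are genuinely positive, so that its definition falls in the correct case. Choosing $\varepsilon_n<\min\{x^i: x^i>0\}$ guarantees this.

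For $(b)$, $(a)$ already gives "$\ge$". For the reverse, fix $x\preceq y$. Under $\mu((0,1]^d)=\nu((0,1]^d)=1$, both measures give zero mass to the faces $\{u^i=0\}$. Hence $\mu(]\![x,y]\!])=\mu(]\!]x,y]\!])$, since the two sets differ only by points with some $u^i=0$, and likewise for $\nu$. So $(c)$ yields $D^\infty\le\sup|\mu(]\!]x,y]\!])-\nu(]\!]x,y]\!])|$.

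The argument is elementary, and the main obstacle is the careful handling of the boundary coordinates. This is precisely why $(c)$ needs the modified boxes: a closed box touching $\{u^i=0\}$ cannot be approximated from outside by boxes $]\!]x',y]\!]$ with $x'\in[0,1]^d$.
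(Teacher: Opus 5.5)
Your proof is correct and follows essentially the paper's argument. You use closed boxes increasing to $]\!]x,y]\!]$ (resp.\ to $]\![x,y]\!]$, keeping $x_n^i=0$ when $x^i=0$) for the lower bounds, and modified boxes $]\![x_n,y]\!]$ with $x_n^i\uparrow x^i$ decreasing to $[\![x,y]\!]$ for the reverse inequality in $(c)$, all via monotone continuity of finite measures. The only difference is the order: you deduce $(b)$ from $(c)$ by noting that the faces $\{u^i=0\}$ are $\mu$- and $\nu$-null, whereas the paper proves $(b)$ directly via $]\!]x_n,y]\!]\downarrow K_{x,y}$ with $K_{x,y}\cap(0,1]^d=[\![x,y]\!]\cap(0,1]^d$, and then reuses that construction for $(c)$.
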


\subsection{Bounding Wasserstein distances by the uniform discrepancy} To achieve our first goal, we will rely on the following bounds for the Wasserstein distances: \eqref{eq:FG2} is mainly adapted from Lemma~5 in~\cite{FG} whereas \eqref{eq:FG3} also uses ideas from former results contained in  \cite{Dereich13}.

\begin{prop}[Existing upper-bound and a variant]\label{prop:lemmefondaFG}
$(a)$ Let $\mu$ and $\nu$ two probability measures on $\big([0,1]^d, {\cal B}or([0,1]^d),\lambda_d\big)$ be such that $\mu\big((0,1]^d\big)= \nu\big((0,1]^d\big) =1$. Then, 
\begin{equation}\label{eq:FG2}
{\cal W}^p_p(\mu, \nu)\le\diamd^p\frac{2^p+1}{2}\sum_{\ell= 1}^{+\infty} 2^{-p\ell} \sum_{F\in {\cal P}_\ell}|\mu(F)-\nu(F)|
\end{equation}
where  $\displaystyle \diamd=\sup_{x,y\in (0,1]^d}|y-x|$ depends on $p$, $d$ and the norm $|\cdot|$ on $(0,1]^d$ and
\begin{equation}\label{eq:defpel}
{\cal P}_\ell= \Big\{a+(-2^{-(\ell+1)}, 2^{-(\ell+1)}]^d,\; a = \frac{2\mathbf{k}+\mbox{\bf 1}}{2^{\ell+1}},\,\mathbf{k}\!\in\{0,\ldots,2^{\ell}-1\}^d  \Big\}
\end{equation}
with $\mbox{\bf 1} = (1,Ê\ldots,1)$. Note that ${\rm card}({\cal P}_\ell)= 2^{d\ell}$. We also have for any $\ell_0\in\mathbb{N}^*$,
\begin{equation}\label{eq:FG3}
{\cal W}^p_p(\mu, \nu)\le{\diamd^p}\bigg(\frac{2^p+1}{2}\sum_{\ell= 1}^{\ell_0} 2^{-p\ell} \sum_{F\in {\cal P}_\ell}|\mu(F)-\nu(F)|+2^{-p\ell_0}\bigg).
\end{equation}

\noindent $(b)$ When $\mu$ and $\nu$ are probability measures on $([0,1]^d, {\cal B}or([0,1]^d),\lambda_d)$, then~\eqref{eq:FG2} still holds with the family of $(\tilde{{\cal P}_\ell})_{\ell\ge0}$, where $\tilde{{\cal P}_\ell}$ is a partition of $[0,1]^d$ which only differs from ${{\cal P}_\ell}$ for  the semi-open boxes with a multi-index $\mathbf{k}=(k_1,\ldots,k_d)\!\in\{ 0,\ldots,2^{\ell}-1\}^d$ for which there exists $i\!\in\{1,d\}$ such that $k_i=0$. When such is the case, the semi-open box $]\!] \frac {\mathbf{k}}{2^{\ell}}, \frac {\mathbf{k}+ \mathbf{1}}{2^{\ell}}]\!] = \frac{2\mathbf{k}+\ \mathbf{1}}{2^{\ell+1}}+(-2^{-(\ell+1)}, 2^{-(\ell+1)}]^d$ is replaced\footnote{More simply, when a semi-open box of ${\cal P}_\ell$ has one or several faces which are included in the faces of $[0,1]^d$, we add them to define the elements of $\tilde{{\cal P}_\ell}$.} by
$$
\Big]\!\Big[ \frac {\mathbf{k}}{2^{\ell}}, \frac {\mathbf{k}+ \mathbf{1}}{2^{\ell}}\Big]\!\Big]:= \left\{u\in[0,1]^d, \frac{k_i}{2^{\ell}}<u^i\le \frac{k_i+1}{2^{\ell}} \textnormal{ if  $k_i\ge 1$}, 0\le u^i\le \frac{1}{2^{\ell}} \textnormal{ if  $k_i=0$}\right\}.
 $$
 \end{prop}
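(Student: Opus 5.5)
We build an explicit coupling of $\mu$ and $\nu$ by a telescopic (multiscale) transport along the dyadic tree $({\cal P}_\ell)_{\ell\ge0}$, in the spirit of~\cite{Dereich13} and Lemma~5 of~\cite{FG}. Here ${\cal P}_0=\{(0,1]^d\}$, and each $F\in{\cal P}_\ell$ is the disjoint union of its $2^d$ children $F'\in{\cal P}_{\ell+1}$.

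\textbf{Construction at level $\ell$.} For each $F\in{\cal P}_\ell$ we compare the masses of its children. We put
\[
\mu_{|F}\ \text{ and }\ \nu_{|F},\qquad \text{common part } m_{F'}=\min\big(\mu(F'),\nu(F')\big)\ \text{on each child } F'.
\]
The idea is to transport within each child the common mass $m_{F'}$ (to be treated at finer levels). The excess mass $\sum_{F'}(\mu(F')-\nu(F'))_+$ is transported inside $F$, at cost at most $(\diamd 2^{-\ell})^p$ per unit mass, toward the children with a deficit. The mismatch inherited from the parent, of size $|\mu(F)-\nu(F)|$, must also be accounted for.

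To make this rigorous we use the standard recursive scheme. We define sub-probability measures $\mu_\ell\le\mu$, $\nu_\ell\le\nu$ by
\[
\mu_\ell=\sum_{F\in{\cal P}_\ell}\frac{\min(\mu(F),\nu(F))}{\mu(F)}\,\mu_{|F},
\]
and similarly $\nu_\ell$. The remainders $\mu_{\ell-1}-\mu_\ell$ and $\nu_{\ell-1}-\nu_\ell$ (suitably renormalized so that they have equal total mass inside each $F\in{\cal P}_{\ell-1}$) are coupled arbitrarily inside each cell of ${\cal P}_{\ell-1}$. Each unit of mass then moves at most $\diamd 2^{-(\ell-1)}$.

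\textbf{Counting the mass.} The total mass moved at level $\ell-1$ is bounded by
\[
\sum_{F\in{\cal P}_\ell}|\mu(F)-\nu(F)|\;+\;\sum_{G\in{\cal P}_{\ell-1}}|\mu(G)-\nu(G)|,
\]
the second term coming from the renormalization that equalizes masses within a parent cell. Summing the costs gives
\[
\sum_\ell (\diamd 2^{-(\ell-1)})^p\Big(\sum_{{\cal P}_\ell}|\mu(F)-\nu(F)|+\sum_{{\cal P}_{\ell-1}}|\mu(G)-\nu(G)|\Big).
\]
Regrouping by level and using the factor-$\tfrac12$ gain from the fact that each unmatched unit of mass is counted on both sides (total variation equals half the $L^1$ mismatch) yields the constant $\frac{2^p+1}{2}\,2^{-p\ell}$ in~\eqref{eq:FG2}. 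The level-$0$ term vanishes since $\mu((0,1]^d)=\nu((0,1]^d)=1$. Finally, the matched part $\lim_\ell\mu_\ell$ is concentrated on the diagonal, since cells shrink to points, so it contributes zero cost. Tracking the exact constant $\frac{2^p+1}{2}$, rather than a cruder $2^p+1$, is the bookkeeping step I expect to be the main obstacle. I would first try to handle it by matching the excess and deficit within each parent simultaneously, so that each displaced unit pays once at scale $2^{-(\ell-1)}$ and once at scale $2^{-\ell}$.

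\textbf{Truncated bound~\eqref{eq:FG3}.} We stop the recursion at level $\ell_0$. The still-matched measures $\mu_{\ell_0}$ and $\nu_{\ell_0}$ have equal masses in every cell of ${\cal P}_{\ell_0}$, so we couple them cellwise. The cost is at most $(\diamd 2^{-\ell_0})^p$ times a total mass of at most $1$, giving the additional term $\diamd^p2^{-p\ell_0}$.

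\textbf{Part~$(b)$.} The same construction applies verbatim with $\tilde{\cal P}_\ell$. These families are still nested partitions, now of $[0,1]^d$ itself, with cell diameters at most $\diamd 2^{-\ell}$. Nothing in the argument used the specific semi-open shape of the cells, only nestedness, the partition property and the diameter bound.
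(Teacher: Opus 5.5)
Your recursion, as defined, does not produce a coupling. Since $\mu_\ell=\sum_{F\in{\cal P}_\ell}\frac{\min(\mu(F),\nu(F))}{\mu(F)}\mu_{|F}$ is computed from $\mu$ itself at every level, the sequence $(\mu_\ell)_\ell$ is not decreasing. Suppose $G\in{\cal P}_{\ell-1}$ satisfies $\mu(G)>\nu(G)$ while a child $F$ satisfies $0<\mu(F)\le\nu(F)$. Then $(\mu_{\ell-1}-\mu_\ell)_{|F}=\big(\frac{\nu(G)}{\mu(G)}-1\big)\mu_{|F}$ is a nonzero negative measure, so the ``remainders'' cannot be coupled. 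Renormalizing to equal masses does not help, because $\mu_{\ell-1}(G)-\mu_\ell(G)=\nu_{\ell-1}(G)-\nu_\ell(G)$ already holds; the problem is the sign. Consequently your bound on the mass moved at level $\ell-1$ is not established, and neither is the parent term $\sum_{G}|\mu(G)-\nu(G)|$. The factor $\frac12$ is asserted rather than derived, and, as you note yourself, that is the whole content of the constant $\frac{2^p+1}{2}$.

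The missing idea is to define the matched mass recursively, conditionally on the parent; this is the coupling of \cite[Lemma 2]{Dereich13} written in terms of masses. Put $m_{(0,1]^d}=1$ and, for a child $F$ of $G$, set $m_F=m_G\min\big(\frac{\mu(F)}{\mu(G)},\frac{\nu(F)}{\nu(G)}\big)$, $(\mu_\ell)_{|F}=\frac{m_F}{\mu(F)}\mu_{|F}$ and $(\nu_\ell)_{|F}=\frac{m_F}{\nu(F)}\nu_{|F}$, with $0/0=0$. Then $\mu_\ell\le\mu_{\ell-1}$, $\nu_\ell\le\nu_{\ell-1}$ and $m_G\le\nu(G)$. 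The mass released inside $G$ equals $m_G\cdot\frac12\sum_{F}\big|\frac{\mu(F)}{\mu(G)}-\frac{\nu(F)}{\nu(G)}\big|$. The inequality of \cite[Lemma 5]{FG}, namely $\nu(G)\big|\frac{\nu(F)}{\nu(G)}-\frac{\mu(F)}{\mu(G)}\big|\le|\nu(F)-\mu(F)|+\frac{\mu(F)}{\mu(G)}|\mu(G)-\nu(G)|$, bounds this mass by $\frac12\big(\sum_{F\subset G}|\mu(F)-\nu(F)|+|\mu(G)-\nu(G)|\big)$. Paying $(\diamd\,2^{-(\ell-1)})^p$ per unit then gives exactly $\frac{\diamd^p}{2}\sum_{\ell\ge1}\big(2^{-p(\ell-1)}+2^{-p\ell}\big)\sum_{F\in{\cal P}_\ell}|\mu(F)-\nu(F)|$, which is the constant $\frac{2^p+1}{2}$. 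With this repair, your truncation step for \eqref{eq:FG3} is correct: stop at $\ell_0$ and couple the still-matched masses cellwise, at cost at most $\diamd^p2^{-p\ell_0}$. It is also more direct than the paper's argument. The paper refines the partitions by a factor $K$ beyond level $\ell_0$, uses $\delta_\ell\le 2$, lets $K\to\infty$, and then removes a positivity assumption by perturbing $\mu$ into $\ve\nu+(1-\ve)\mu$. Your mass recursion instead handles null cells through the convention $0/0=0$, and \eqref{eq:FG2} follows by letting $\ell_0\to\infty$.
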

\begin{proof} $(a)$  {\sc Step~0}. Inequality \eqref{eq:FG2} is a straightforward adaptation of~\cite[Lemma 5]{FG} written for the canonical Euclidean norm in the set $(-1,1]^d$. For Inequality \eqref{eq:FG3}, one needs to slightly modify~\cite[Lemma 2]{Dereich13} by introducing a sequence $(\hat{\cal P}_\ell)_{\ell\ge0}$ of partitions built as follows:
\begin{itemize}
\item{} For $\ell \in\llbracket 0,\ell_0\rrbracket$, $\hat{\cal P}_\ell={\cal P}_\ell$,
\item{} For $\ell \ge \ell_0+1$ and a given integer $K\ge 2$, $\hat{\cal P}_\ell$ is deduced from $\hat{\cal P}_{\ell-1}$ by dividing each element of $\hat{\cal P}_{\ell-1}$ into $K^d$ new elements. More precisely,
\begin{equation}\label{eq:defphatel}
\hspace{-1cm} \hat{{\cal P}_\ell}= \Big\{a+(-2^{-\ell_0-1} K^{-(\ell-\ell_0)}, 2^{-\ell_0-1} K^{-(\ell-\ell_0)}]^d,\; a = \frac{2{\bf k}+\mbox{\bf 1}}{2^{\ell_0+1} K^{\ell-\ell_0}},\,{\bf k}\!\in
\{0,\ldots,2^{{\ell_0}}K^{\ell-\ell_0}-1\}^d  \Big\}.
\end{equation}
\end{itemize}
We have ${\rm Card}(\hat{\cal P}_\ell)=2^{d\ell_0}K^{d(\ell-\ell_0)}$. 
Since for any $\ell\ge1$, $\hat{\cal P}_\ell$ is built by partitioning each set of $\hat{\cal P}_{\ell-1}$, the proof of~\cite[Lemma 2]{Dereich13} still works. More precisely, noting that the diameter of an element of ${\hat{\cal P}_\ell}$ is $2^{-\ell}\diamd$ when $\ell\le \ell_0$ and $2^{-\ell_0} K^{-(\ell-\ell_0)}$ when $\ell\ge\ell_0+1$.

\noindent {{\sc Step~1}.  First assume that $\mu$ and $\nu$ satisfy the condition
\[
\forall\,C \!\in {\hat{\mathcal{P}} = \bigcup_{\ell \ge1}\hat{{\cal P}_\ell}}, \quad \nu(C)>0 \Longrightarrow \mu(C)>0.
\] 
with the convention $\frac 00=0$}. Then, a careful reading of the proof of~\cite[Lemma 2]{Dereich13}
 leads to (where $L$ denotes a stopping time defined in the proof of this lemma):
\begin{align*}
{\cal W}_p^p(\mu,\nu)&\le \frac{\diamd^p}{2} \mathbb{E}[2^{-pL}{\bf 1}_{\{L\le \ell_0\}}+ 2^{-p\ell_0}K^{-p(L-\ell_0)}{\bf 1}_{\{L> \ell_0\}}]\\
&\le\frac{\diamd^p}{2}\sum_{\ell=0}^{\ell_0} 2^{-p\ell} \sum_{F\in\hat{\cal P}_\ell}\nu(F) \sum_{C \textnormal{ child of } F}\left|\frac{\nu(C)}{\nu(F)}-\frac{\mu(C)}{\mu(F)}\right|\\
&+ \frac{\diamd^p}{2}\sum_{\ell=\ell_0+1}^{+\infty} 2^{-p\ell_0}K^{-p(\ell-\ell_0)} \sum_{F\in\hat{\cal P}_\ell}\nu(F) \sum_{C \textnormal{ child of } F}\left|\frac{\nu(C)}{\nu(F)}-\frac{\mu(C)}{\mu(F)}\right|.
\end{align*}
At this stage, we use the argument from~\cite[Lemma 5]{FG}: noting that 
$$
\nu(F)\left|\frac{\nu(C)}{\nu(F)}-\frac{\mu(C)}{\mu(F)}\right|\le|\nu(C)-\mu(C)|+\frac{\mu(C)}{\mu(F)}|\mu(F)-\nu(F)|,
$$
and setting 
$$ 
\delta_\ell=\sum_{F\in\hat{\cal P}_\ell}|\mu(F)-\nu(F)|,
$$
we get
\begin{align*}
{\cal W}_p^p(\mu,\nu)&\le\frac{\diamd^p}{2}\left(\sum_{\ell=1}^{\ell_0} \delta_\ell (2^{-p\ell}+2^{-p(\ell-1)})+ \delta_{\ell_0+1} (2^{-p\ell_0}+ 2^{-p\ell_0} K^{-p})\right)\\
&+\frac{\diamd^p}{2}\left(\sum_{\ell=\ell_0+2}^{+\infty} \delta_\ell (2^{-p\ell_0} K^{-p(\ell-\ell_0-1)}+2^{-p\ell_0} K^{-p(\ell-\ell_0)})\right)\\
&\le \frac{\diamd^p}{2}(2^p+1)\sum_{\ell=1}^{\ell_0} 2^{-p\ell} \delta_\ell+\diamd^p2^{-p\ell_0}+O(K^{-p}),
\end{align*}
 where, in the last line, we used that $\delta_\ell\le 2$ for any $\ell\ge1$. The result follows by letting $K$ go to $+\infty$.
 
 \noindent {\sc Step~2}. To get rid of the  above weak absolute continuity assumption on $\mu$ and $\nu$, we introduce for $\varepsilon\!\in (0,1)$, $\mu_\ve= \ve \nu +(1-\ve)\mu$.   Then
 \begin{equation*}
{\cal W}^p_p(\mu_\ve, \nu)\le{\diamd^p}\bigg(\frac{2^p+1}{2}\sum_{\ell= 1}^{\ell_0} 2^{-p\ell} \sum_{F\in {\cal P}_\ell}|\mu_\ve(F)-\nu(F)|+2^{-p\ell_0}\bigg).
\end{equation*}
It is clear that $\mu_\ve$ converges in total variation to  $\mu$ so that the finite sum in the right hand side of the above inequality converges to that of~\eqref{eq:FG3}. On the other hand,  as $Z_\ve Y + (1-Z_\ve)X$ where $X\sim\mu$, $Y\sim\nu$ and $Z\sim {\cal B}(\{0,1\},\ve)$, independent of $X$, $Y$ has distribution $\ve \nu +(-\ve)\mu$, one checks that
\[
{\cal W}^p_p(\mu_\ve, \mu)\le \E\, |Z_\ve Y + (1-Z_\ve)X-X|^p= \EÊ|Z_\ve |^p \E\, |X-Y|^p=Ê\ve \E\, |X-Y|^p\xrightarrow{\ve\rightarrow0} 0.
\]
 Hence $|{\cal W}_p(\mu_\ve, \nu)-{\cal W}^p(\mu, \nu)|\le {\cal W}^p(\mu_\ve, \mu)\to 0$ as $\ve \to 0$ which establishes~\eqref{eq:FG3}.

 \smallskip
\noindent $(b)$ One first checks that the coupling argument of~\cite[Lemma 2]{Dereich13} is still true with $[0,1]^d$ and the family of partitions $(\tilde{P}_\ell)_\ell$. Hence,~\cite[Lemma 5]{FG} whose proof is based on this lemma and on arguments which do not depend on the space and the  partition, also extends to $[0,1]^d$.
\end{proof}
From~\cref{prop:lemmefondaFG}, we can deduce the following upper-bounds of the $p$-Wasserstein distance by the uniform discrepancy.
\begin{thm}[Bounding Wasserstein distance by the uniform discrepancy]  \label{prop:WvsDisc} Let $\mu$ and $\nu$ two probability measures on $([0,1]^d, {\cal B}or([0,1]^d),\lambda_d)$.  Then,
\begin{itemize}
\item If $p>d$ then
\begin{align*}
{\cal W}_p^p(\mu, \nu)& \le   \frac{\diamd^p(2^p+1)}{2(2^{p-d}-1)}    D^{\infty}(\mu,\nu) .
\end{align*}
If furthermore, $1+2^{-p}-2^{p-d}>0$, i.e. $p< d+\frac{\log(1+\sqrt{1+2^{-(d-2)}})}{\log 2}-1$, then
\begin{align*}
{\cal W}_p^p(\mu, \nu)& \le   \frac{\diamd^p}{2^{p-d}-1}\left[\frac{2^p+1}{2}    D^{\infty}(\mu,\nu) -  2^{p(1-\frac{1}{d})}\left(1+2^{-p}-2^{p-d}\right) D^{\infty}(\mu,\nu)^{\frac{p}{d}}\right].
\end{align*}

\item If $p=d$  then
\[
{\cal W}_p^p(\mu, \nu) \le  \diamd^d   \bigg( \bigg(   \frac{(d+1)2^{d-1}}{d}+\frac 1{2d}\bigg) D^{\infty}(\mu,\nu)+\frac{2^d+1}{2d\log 2} D^{\infty}(\mu,\nu)\log\Big(\frac{1}{ D^{\infty}(\mu,\nu)}\Big)\bigg).
\]
\item If $p<d$ then
\[
{\cal W}_p^p(\mu, \nu) \le \diamd^p    {2^{-\frac pd}}   \bigg( \frac{2^p+1}{1-{2^{p-d}}} + 2^{p}\bigg) D^{\infty}(\mu,\nu)^{\frac pd}.
\]
\end{itemize}

\end{thm}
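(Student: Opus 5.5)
The plan is to feed \cref{prop:lemmefondaFG} with two elementary bounds on the level-$\ell$ quantity $\delta_\ell:=\sum_{F\in{\cal P}_\ell}|\mu(F)-\nu(F)|$ (or its $\tilde{\cal P}_\ell$ version), and then optimise over the level at which one switches from one bound to the other. By \cref{lem:tech1}$(c)$, every cell of $\tilde{\cal P}_\ell$ is a box of the form $]\![x,y]\!]$, so $|\mu(F)-\nu(F)|\le D:=D^\infty(\mu,\nu)$. Since ${\rm card}(\tilde{\cal P}_\ell)=2^{d\ell}$, this gives $\delta_\ell\le 2^{d\ell}D$. Trivially one also has $\delta_\ell\le 2$. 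Part $(b)$ of \cref{prop:lemmefondaFG} lets us use \eqref{eq:FG2} and \eqref{eq:FG3} without the support restriction on $(0,1]^d$; I assume the truncated version \eqref{eq:FG3} also transfers to $\tilde{\cal P}_\ell$ by the same remark.

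\emph{Case $p>d$.} Plug $\delta_\ell\le 2^{d\ell}D$ into \eqref{eq:FG2}:
\[
{\cal W}_p^p\le \diamd^p\tfrac{2^p+1}{2}D\sum_{\ell\ge1}2^{-(p-d)\ell}=\diamd^p\tfrac{2^p+1}{2(2^{p-d}-1)}D .
\]
For the refined bound, I would instead use \eqref{eq:FG3} with a cut-off $\ell_0$. Bounding the finite geometric sum exactly gives
\[
{\cal W}_p^p\le \diamd^p\Big(\tfrac{2^p+1}{2(2^{p-d}-1)}D\big(1-2^{-(p-d)\ell_0}\big)+2^{-p\ell_0}\Big).
\]
The negative correction is of size $D\,2^{-(p-d)\ell_0}$, which must beat $2^{-p\ell_0}$. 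Choosing $2^{-d\ell_0}\approx D$, i.e.\ $\ell_0=\lfloor \tfrac1d\log_2(1/D)\rfloor$ with the floor controlled by a factor $2$, produces a term of order $D^{p/d}$. The coefficient of that term is $2^{p(1-1/d)}(1+2^{-p}-2^{p-d})/(2^{p-d}-1)$, and it is negative exactly under the stated condition. The main obstacle is the bookkeeping of the rounding of $\ell_0$ needed to land on that precise constant. I would write $2^{-d\ell_0}=D\theta$ with $\theta\in[1,2^d)$ and bound the resulting expression uniformly in $\theta$.

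\emph{Cases $p\le d$.} Use \eqref{eq:FG2}, splitting the sum at $\ell_0$. For $\ell\le \ell_0$ apply $\delta_\ell\le 2^{d\ell}D$, and for $\ell>\ell_0$ apply $\delta_\ell\le 2$. Alternatively, use \eqref{eq:FG3} directly, which replaces the tail by $2^{-p\ell_0}$; this is what yields the additive $2^p$-type term in the $p<d$ bound.
\begin{itemize}
\item For $p<d$, the head is $\le \tfrac{2^p+1}{2}D\,2^{(d-p)\ell_0}/(1-2^{p-d})$. Taking $\ell_0=\lceil\tfrac1d\log_2(1/D)\rceil$, so that $2^{d\ell_0}\in[D^{-1},2^dD^{-1})$, makes both pieces $O(D^{p/d})$. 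Tracking the powers of $2$ coming from the ceiling gives the prefactor $2^{-p/d}\big(\tfrac{2^p+1}{1-2^{p-d}}+2^p\big)$; the exact placement of $\ell_0$ may need adjusting to match it.
\item For $p=d$, the head is $\tfrac{2^d+1}{2}D\ell_0$ and the tail is $2^{-d\ell_0}$. With $\ell_0\approx \tfrac1d\log_2(1/D)$ this produces the $\tfrac{2^d+1}{2d\log2}D\log(1/D)$ term. The rounding and tail contribute the linear-in-$D$ constant.
\end{itemize}
In all three regimes, the constant-matching under integer rounding of $\ell_0$ is the only delicate part; the rest is geometric-series arithmetic.
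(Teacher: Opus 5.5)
Your route is the paper's. You bound each level sum by $\min\big(2,2^{d\ell}D\big)$ with $D:=D^\infty(\mu,\nu)$ via \cref{lem:tech1}, insert this into \eqref{eq:FG2}/\eqref{eq:FG3}, cut at a level $\ell_0$, and pass to the general case through $\tilde{\cal P}_\ell$ (the paper makes the same tacit extension of \eqref{eq:FG3}). Your first bound for $p>d$ is exactly the paper's.

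The gap is that the content of the theorem lies in its explicit constants, and you leave them unverified. In the case $p<d$ your concrete choice provably cannot deliver them. With $\ell_0=\lceil\frac1d\log_2(1/D)\rceil$ and $\delta_\ell\le 2^{d\ell}D$ on the whole head, set $\theta=2^{d\ell_0}D\in[1,2^d)$. Your bound then reads $\diamd^p\big[\big(A\theta^{1-p/d}+\theta^{-p/d}\big)D^{p/d}-AD\big]$ with $A=\frac{2^p+1}{2(1-2^{p-d})}$. The map $\theta\mapsto A\theta^{1-p/d}+\theta^{-p/d}$ is increasing on $[1,2^d)$, since its critical point $\frac{p}{A(d-p)}$ lies below $1$. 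Every $\theta\in[1,2^d)$ occurs for arbitrarily small $D$, so the best constant this argument can certify is $A\,2^{d-p}+2^{-p}$. For $d=2$, $p=1$ this is $6.5$, against the stated $4\sqrt2\approx5.66$. For $p=1$ it grows like $\frac34 2^d$, whereas the stated constant tends to $5$. The loss comes from cutting where $2^{d\ell}D$ reaches $1$ rather than where it reaches the trivial bound $2$. As a result, levels on which $2^{d\ell}D$ is as large as $2^d$ are charged the linear bound.

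What is missing is the paper's cut $\ell_0=\ell^\star-1$ with $\ell^\star=\inf\{\ell\ge1:2^{d\ell}D\ge2\}=\lceil\log_2(2/D)/d\rceil$. Then only levels with $2^{d\ell}D<2$ enter the head, and $2^{d\ell_0}D\in[2^{1-d},2)$. For every $r>0$ this gives $2^{-r/d}D^{r/d}\le 2^{-r\ell_0}\le 2^{r(1-1/d)}D^{r/d}$, which is \eqref{eq:boundellstar}.

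\begin{itemize}
\item For $p<d$, the head is at most $\frac{(2^p+1)2^{(d-p)\ell_0}}{2(1-2^{p-d})}D\le\frac{(2^p+1)2^{-p/d}}{1-2^{p-d}}D^{p/d}$. The tail is $2^{-p\ell_0}\le 2^{p}2^{-p/d}D^{p/d}$. These add up to exactly the stated constant.
\item For $p=d$, the bounds $\ell_0<\frac{\log(2/D)}{d\log2}$ and $2^{-d\ell_0}\le2^{d-1}D$ give the term $\frac{2^d+1}{2d\log2}D\log(1/D)$ and the linear coefficient $\frac{2^d+1}{2d}+2^{d-1}$. Your unspecified rounding matters here as well. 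Bounding head and tail separately after a floor or a ceiling cut yields linear constants $2^d$ or $\frac{2^d+1}{2}+1$, both too large when $d=2$.
\item For the refined bound with $p>d$, your floor cut does work. It produces the coefficient $2^p-\frac{2^p+1}{2(2^{p-d}-1)}$ in front of $\diamd^pD^{p/d}$, not the one you write. Under the stated condition it is no larger than the stated (negative) coefficient, so the bound obtained is even stronger. This fits with the paper's own two-sided estimates, which actually give $2+2^{-p}-2^{p-d}$ instead of $1+2^{-p}-2^{p-d}$ in that coefficient. The stated correction is therefore conservative.
\end{itemize}
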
 

\begin{proof} We first prove the result when $\mu$ and $\nu$ are supported by $(0,1]^d$. 

\noindent {\sc Step~1}: {$\mu\big((0,1]^d\big)= \nu\big((0,1]^d\big) =1$}. In this case,   the elements of ${\cal P}_{\ell}$ (defined by \eqref{eq:defpel}) are all semi-open boxes.
 Hence,  we  deduce using {Lemma~\ref{lem:tech1}}  that, for every $\ell\ge1$, 
\begin{align}
 \sum_{F\in {\cal P}_\ell}|\mu(F)-\nu(F)|&\le \min\Big( \sum_{F\in {\cal P}_\ell}\mu(F)+\nu(F), 2^{d\ell}D^{\infty}(\mu,\nu)\Big)\nonumber\\
 & = \min \Big(2, 2^{d\ell}D^{\infty}(\mu,\nu)\Big).\label{ineq:basedinfty}
\end{align}
Hence {by \eqref{eq:FG2} and \eqref{eq:FG3}, for any  $\ell_0\in\mathbb{N}\cup\{+\infty\}$},
\begin{align}\label{eq:refwp}
{\cal W}_p^p(\mu, \nu) & \le\diamd^p \left(\frac{2^p+1}{2}\sum_{\ell=1}^{\ell_0} 2^{-p\ell}  \min\big(2,  2^{d\ell} D^{\infty}(\mu,\nu)\big)+2^{-p\ell_0}\right).
\end{align}
Note that for the case $\ell_0=0$, the above inequality is true with the convention $\sum_{\emptyset}=0$ (since the inequality ${\cal W}_p^p(\mu, \nu)  \le\diamd^p$ is always true).

\smallskip \noindent {\sc Case 1} $(p>d)$. We first apply \eqref{eq:refwp} with $\ell_0=+\infty$ and obtain:
\begin{align}\label{eq:bound1case1}
{\cal W}_p^p(\mu, \nu) & \le\diamd^p \frac{2^p+1}{2}\sum_{\ell=1}^{\ell_0} 2^{(d-p)\ell} D^{\infty}(\mu,\nu)\le \diamd^p \frac{2^p+1}{2}\frac{2^{d-p}}{1-2^{d-p}} D^{\infty}(\mu,\nu).
\end{align}
Second, we apply \eqref{eq:refwp} with $\ell_0=\ell^\star-1$ where 
$$\ell^\star:=\inf\{\ell \ge 1, 2^{d \ell}D^{\infty}(\mu,\nu)\ge 2\}.$$
One can check that
\[
 \ell^\star=\bigg\lceil \frac{\log (2/D^{\infty}(\mu,\nu))}{d\log 2} \bigg\rceil\ge\lceil \tfrac 1d\rceil \ge  1,
\]
since $\log D^{\infty}(\mu,\nu)\le 0$. As a consequence, applying \eqref{eq:refwp} with $\ell_0= \ell^{\star}-1$, we get
\begin{align}
{\cal W}_p^p(\mu, \nu) & 
 \le \diamd^p \left[\frac{2^p+1}{2}\sum_{\ell=1}^{\ell^\star-1} 2^{(d-p)\ell} D^{\infty}(\mu,\nu) + 2^{-p(\ell^\star-1)}\right]\nonumber\\
\label{eq:techdisc}&  \le   \diamd^p \left[b_{p,d}(1-2^{-(p-d)(\ell^\star-1)}) D^{\infty}(\mu,\nu)+ 2^{-p(\ell^\star-1)}\right],
\end{align}
with 
$$ 
b_{p,d}=\frac{2^p+1}{2({2^{p-d}-1})}.
$$
For a given $r>0$, one can check that
\begin{equation}\label{eq:boundellstar}
\begin{split}
&2^{-r(\ell^\star-1)}\le 2^{-\frac rd \frac{\log (2/D^{\infty}(\mu,\nu))}{\log 2}+r}= 2^{r(1-\frac 1d)}D^{\infty}(\mu,\nu)^{\frac rd},\\
&2^{-r(\ell^\star-1)}\ge 2^{-\frac rd}D^{\infty}(\mu,\nu)^{\frac rd}.
\end{split}
\end{equation}
Plugging these inequalities into \eqref{eq:techdisc}, this leads to:
\begin{equation*}
{\cal W}_p^p(\mu, \nu)\le b_{p,d} D^\infty(\mu,\nu)+\left(-b_{p,d} 2^{1-\frac{p}{d}}+ 2^{p(1-\frac{1}{d})}\right)D^\infty(\mu,\nu)^{\frac{p}{d}}.
\end{equation*}
One can check that 
$$b_{p,d} 2^{1-\frac{p}{d}}- 2^{p(1-\frac{1}{d})}=\frac{2^{p(1-\frac{1}{d})}}{2^{p-d}-1}\left(1+2^{-p}-2^{p-d}\right).$$ 
This provides the  second announced estimate.\\

\noindent {\sc Case 2} $(p=d)$. Here, \eqref{eq:refwp} again applied with $\ell_0=\ell^\star-1$ yields
\begin{equation*}
{\cal W}_p^p(\mu, \nu)  
\le\diamd^p \left(\frac{2^p+1}{2}(\ell^\star-1)D^{\infty}(\mu,\nu)+2^{-p(\ell^\star-1)}\right).
\end{equation*}
Using that $\ell^\star-1<\frac{\log (2/D^{\infty}(\mu,\nu))}{d\log 2}$  and \eqref{eq:boundellstar} (applied with $r=p$), we obtain
\begin{equation*}
{\cal W}_p^p(\mu, \nu)  
\le\diamd^p \frac{2^p+1}{2}\frac{\log (2/D^{\infty}(\mu,\nu))}{d\log 2}D^{\infty}(\mu,\nu)+2^{p(1-\frac 1d)}D^{\infty}(\mu,\nu).
\end{equation*}
The estimate follows.

\smallskip
\noindent {\sc Case 3} ($p<d$). By \eqref{eq:refwp} applied with $\ell_0=\ell^\star-1$, we obtain similarly to \eqref{eq:techdisc}
\begin{equation*}
{\cal W}_p^p(\mu, \nu)  
\le  \diamd^p \left(\frac{(2^p+1)2^{(d-p)(\ell^\star-1)}}{2(1-{2^{p-d}})} D^{\infty}(\mu,\nu)+ 2^{-p(\ell^\star-1)}\right).
\end{equation*}
By the second inequality of \eqref{eq:boundellstar} and the one below (applied with $r=d-p$),
$$ 2^{r(\ell^\star-1)}\le 2^{\frac rd \frac{\log (2/D^{\infty}(\mu,\nu))}{\log 2}}= 2^{\frac{r}{d}}D^{\infty}(\mu,\nu)^{-\frac rd},\quad r\ge0,
$$
 we  deduce that
 \begin{equation*}
{\cal W}_p^p(\mu, \nu)  
\le  \diamd^p \left(\frac{(2^p+1)2^{-\frac{p}{d}}}{1-{2^{p-d}}} D^{\infty}(\mu,\nu)^{\frac{p}{d}}+ 2^{p(1-\frac 1d)}D^{\infty}(\mu,\nu)^{\frac pd}\right).
\end{equation*}

\noindent {\sc Step~2} (\textit{General case}).  Here, we have to use ~\cref{prop:lemmefondaFG}$(b)$ and thus to consider  the elements of $\tilde{{\cal P}}_{\ell}$. These elements take the form $]\![x,y]\!]$ defined in~\cref{lem:tech1}$(c)$. Hence, from this result and from \eqref{eq:FG2} and \eqref{eq:FG3}, we deduce that \eqref{eq:refwp} still holds true with $\tilde{{\cal P}}_{\ell}$.
The sequel of the above proof being entirely based on this inequality, we deduce that the conclusions also hold true in the general case.
\end{proof}

When $p=d=1$, the above bounds are sub-optimal due to the following proposition (where the norm is the absolute value).
\begin{prop}[One dimensional  setting for ${\cal W}_1$]\label{prop:1DW1Disc} If $p=d=1$, then
\[
{\cal W}_1(\mu, \nu) \le D^\star(\mu,\nu) \le D^{\infty}(\mu, \nu).
\]
\end{prop}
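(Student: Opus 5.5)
The first inequality will come from the classical one-dimensional representation of ${\cal W}_1$ through distribution functions. The second is the general comparison \eqref{eq:discbound} specialized to $d=1$. Let $F_\mu(x)=\mu([0,x])$ and $F_\nu(x)=\nu([0,x])$ for $x\in[0,1]$, extended by $0$ on $(-\infty,0)$ and by $1$ on $(1,+\infty)$.

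I will use the standard identity
\[
{\cal W}_1(\mu,\nu)=\int_{\R}|F_\mu(x)-F_\nu(x)|\,dx .
\]
This can be obtained in either of two ways. One can take the comonotone coupling $X=F_\mu^{-1}(U)$, $Y=F_\nu^{-1}(U)$ with $U\sim{\cal U}([0,1])$, which is optimal for convex costs in dimension one, and then use $\int_0^1|F_\mu^{-1}(u)-F_\nu^{-1}(u)|\,du=\int_\R|F_\mu-F_\nu|\,dx$ (both sides are the area between the graphs). Alternatively, one can use the Kantorovich--Rubinstein duality: for $1$-Lipschitz $f$, integrate by parts to get $\int f\,d(\mu-\nu)=-\int f'(x)(F_\mu-F_\nu)(x)\,dx$, and take the supremum over $|f'|\le1$. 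If a fully self-contained argument is wanted, I would only need the upper bound. That follows from the coupling side, since the comonotone coupling gives ${\cal W}_1\le \ES|X-Y|=\int_0^1|F_\mu^{-1}-F_\nu^{-1}|\,du$, and the area identity is elementary.

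Since both measures are supported by $[0,1]$, the integrand vanishes outside $[0,1]$. Hence
\[
{\cal W}_1(\mu,\nu)=\int_0^1|F_\mu(x)-F_\nu(x)|\,dx\le \sup_{x\in[0,1]}|\mu([0,x])-\nu([0,x])|=D^\star(\mu,\nu),
\]
because the interval has length one. Finally, $D^\star\le D^\infty$ holds since $[0,x]=[\![0,x]\!]$ is one of the boxes in the supremum defining $D^\infty$; this is the left inequality of \eqref{eq:discbound}.

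The only delicate step is justifying the area identity, or at least the upper bound via quantile functions, when $F_\mu$ and $F_\nu$ have atoms or flat parts. I would handle it with Fubini: write $|a-b|=\int_\R|\mathbf 1_{\{a\le x\}}-\mathbf 1_{\{b\le x\}}|\,dx$ and use $\{F^{-1}(u)\le x\}=\{u\le F(x)\}$ for the generalized left-continuous inverse. Integrating in $u$ then gives exactly $|F_\mu(x)-F_\nu(x)|$ for each $x$.
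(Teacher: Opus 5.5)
Your proof is correct, but it works on the primal (coupling) side, whereas the paper's works on the dual side. The paper invokes the one-dimensional Koksma--Hlawka inequality $|\mu(f)-\nu(f)|\le D^\star(\mu,\nu)\,|m_f|([0,1])$ for functions of finite variation in the measure sense. It notes that a Lipschitz $f$ has $|m_f|([0,1])\le [f]_{\rm Lip}$, and concludes with the Kantorovich--Rubinstein representation of ${\cal W}_1$. Your secondary suggestion, integrating $\int f\,d(\mu-\nu)$ by parts against $F_\mu-F_\nu$, is essentially that argument, since in dimension one Koksma--Hlawka is just this integration by parts.

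Your main line is genuinely different. You take the quantile coupling and prove $\int_0^1|F_\mu^{-1}-F_\nu^{-1}|\,du=\int_\R|F_\mu-F_\nu|\,dx$ by Tonelli and the equivalence $F^{-1}(u)\le x\iff u\le F(x)$, which handles atoms and flat parts correctly. You then bound an $L^1$ norm over an interval of length one by the sup norm. This gives a fully elementary proof: no duality theorem and no variation theory are needed, only one admissible coupling. It also makes the slack transparent, since ${\cal W}_1=\|F_\mu-F_\nu\|_{L^1([0,1])}$ exactly and the only loss is $L^1$ versus $L^\infty$ on $[0,1]$. Moreover, it carries over to unbounded supports once the tails are controlled, which is exactly what the paper later does in \eqref{eq:MAjoW1}.

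The paper's route, in exchange, yields the bound for every test function of bounded variation, not only Lipschitz ones, and places the result within QMC theory. Both arguments are intrinsically one-dimensional. The quantile coupling has no comparable closed form through the c.d.f. in $d\ge 2$, and Lipschitz functions need not have bounded Hardy--Krause variation there. This is why the paper resorts to dyadic partitions for $d\ge 2$.
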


\begin{proof} This relies on the Koksma-Hlawka inequality, which reads as follows in one dimension in  the version established in~\cite{BouLep} or \cite{GilPag2026}. For every function $f:[0,1]\to \R$ with finite variation in the measure sense, meaning that there is a signed measure $m_f$ on $\big([0,1], {\cal B}or([0,1])\big)$ such that $m_f(\{0\})=0$ and $f(x) = f(1) + m_f([0,1-x])$, one has
\[
\big| \mu(f)-\nu(f)\big|\le D^\star(\mu,\nu) |m_f|([0,1])
\]
where $|m_f|$ stands for  the total variation measure of $m_f$. 
In one dimension, a Lipschitz continuous function $f$ has finite variation in the above sense since it is $du$-$a.e.$ differentiable with a bounded derivative $f'$ satisfying
 $$
 f(x)= f(0) +\int_0^x f'(u)du = f(1)-\int_0^{1-x}f'(1-v)dv
 $$
 so that $m_f (du)= -f'(1-u)du$ and $|m_f| = |f'(1-u)|du$. Then $m_f(\{0\})=0$ and $|m_f|([0,1])\le \|f'\|_{L^\infty(du)}= [f]_{\rm Lip}$.
 Consequently for every Lipschitz function
 \[
 \big| \mu(f)-\nu(f)\big|\le [f]_{\rm Lip}D^\star(\mu,\nu).
 \]
The Monge-Kantorovich representation of the ${\cal W}_1$-distance 
\[
{\cal W}_1(\mu,\nu) =\sup_{[f]_{\rm Lip}\le 1} \int f(d\mu-d\nu)
\]
yields the announced result.
\end{proof}

This result suggests that  the $\log$-term  in the upper-bound obtained in~\cref{prop:WvsDisc} for the case $p=d$ is  possibly superfluous. At least such is the case when $d=1$.  Pro\"inov's Theorem in the following section also leads in favor of the same direction. An extension of this result to general $K$--$S$ distance based  on another method  is proposed in Section~\ref{subsec:3.1} .

In order to partially synthesize~\cref{prop:lemmefondaFG}, we derive  the following corollary.
\begin{cor}\label{cor:discempiric} If ($d\ge 2$ and $p\neq d$) or ($d=1$),  there exists a real constant $K_{p,d}$ depending on $p$, $d$ such that, for every $\mu$, $\nu \!\in {\cal P}([0,1]^d)$
\[
{\cal W}_p(\mu, \nu) \le K_{p,d}\sup_{x,y\,\in [0,1]^d} |x-y| \big(D^{\infty}(\mu,\nu)\big)^{\frac 1p\wedge \frac 1d}.
\]
\end{cor}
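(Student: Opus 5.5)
The plan is to read Corollary~\ref{cor:discempiric} off Theorem~\ref{prop:WvsDisc} (with Proposition~\ref{prop:1DW1Disc} for $p=d=1$), using only that $D^\infty(\mu,\nu)\in[0,1]$. Write $D=D^\infty(\mu,\nu)$. In each case I would reduce the bound of Theorem~\ref{prop:WvsDisc} to
\[
{\cal W}_p^p(\mu,\nu)\le C_{p,d}\,\diamd^p\, D^{\frac pd\wedge 1},
\]
then take $p$-th roots and set $K_{p,d}=C_{p,d}^{1/p}$. The key observation is that $\tfrac 1p\wedge\tfrac1d$ equals $\tfrac1p$ when $p\ge d$ and $\tfrac1d$ when $p\le d$.

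\emph{Case $p>d$.} The first bound of Theorem~\ref{prop:WvsDisc} gives ${\cal W}_p^p\le \frac{2^p+1}{2(2^{p-d}-1)}\diamd^p D$. Hence ${\cal W}_p\le K_{p,d}\,\diamd\, D^{1/p}$, and here $\tfrac1p=\tfrac1p\wedge\tfrac1d$.

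\emph{Case $p<d$.} The third bound is already of the form $C_{p,d}\diamd^p D^{p/d}$. Taking the $p$-th root gives the exponent $\tfrac1d=\tfrac1p\wedge\tfrac1d$.

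\emph{Case $d=1$.} Here either $p>1=d$, which is covered above, or $p=d=1$. In the latter case Proposition~\ref{prop:1DW1Disc} gives ${\cal W}_1\le D^\infty$ for the absolute value on $\R$, where the diameter of $[0,1]$ is $1$. Since all norms on $\R$ are multiples of $|\cdot|$, this rescales to ${\cal W}_1\le \sup|x-y|\,D^\infty$. The excluded case $p=d\ge2$ is exactly where the $D\log(1/D)$ term appears, and that term cannot be absorbed into $D^{1/p}$ raised to the $p$-th power.

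The remaining point, which I expect to be the only real (mild) obstacle, is matching $\diamd=\sup_{x,y\in(0,1]^d}|y-x|$ with the factor $\sup_{x,y\in[0,1]^d}|x-y|$ in the statement. For $d\ge1$ these coincide: both are attained at the opposite corners $0$ and $\mathbf 1$ by convexity of the norm, and $(0,1]^d$ is dense in $[0,1]^d$ with the norm continuous. Finally, one should make sure the constants $C_{p,d}$ depend only on $p,d$. They do, since they are explicit in Theorem~\ref{prop:WvsDisc}.
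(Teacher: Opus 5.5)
Your proposal is correct and follows the argument the paper intends (the paper states the corollary as a direct consequence without a separate proof): the cases $p>d$ and $p<d$ are read off Theorem~\ref{prop:WvsDisc} by taking $p$-th roots, and $p=d=1$ comes from Proposition~\ref{prop:1DW1Disc}, with the $D\log(1/D)$ term being exactly why $p=d\ge 2$ is excluded. One small inaccuracy: for a general norm the supremum of $|x-y|$ over $[0,1]^d$ is attained at some pair of diagonally opposite vertices, but not necessarily at $0$ and $\mathbf 1$; this is harmless, since your density argument, or simply the trivial inequality $\diamd\le\sup_{x,y\in[0,1]^d}|x-y|$, is all that is needed.
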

\noindent {\bf Toward a Law of Iterated Logarithm (Monte Carlo simulation).} {Let $(U_n)_{n\ge 1}$ be an {\it i.i.d.} sequence of uniformly distributed vectors on $[0,1]^d$. Then Chung's Law of Iterated Logarithm (see~\cite{Chung49, Kiefer61}) for the star discrepancy reads
\[
\varlimsup_n \sqrt{\frac{2n}{\log\log n}}D^\star(U_1, \ldots, U_n)= 1 \quad \P\mbox{-}a.s.
\]
Combining this  result with that of  Corollary~\ref{cor:discempiric} yields that, if ($d\ge 2$ and $p\neq d$) or ($d=1$),  then there exists a real constant $K_{p,d}$ only depending on $p$, $d$ such that, under the assumptions of this  corollary
\[
\varlimsup_n \bigg({\frac{2n}{\log\log n}}\bigg)^{\frac 12(\frac 1p\wedge \frac 1d)}{\cal W}_p\Big(\frac1n \sum_{k=1}^n \delta_{U_k}, {\cal U}([0,1]^d)\Big)\le  K_{p,d} \quad \P\mbox{-}a.s.
\]
where $K_{p,d}$ is a finite real constant from Corollary~\ref{cor:discempiric}.

\subsection{A refinement when $p=1$ and $d>1$}
 In view of the connection with Pro\"inov's Theorem recalled in~\cref{sec:proinov}, we propose a refined result for the ${\cal W}_1$-distance when the dimension $d$ is greater than $2$. By an optimization strategy on the choice of $\ell_0$ defined in~\cref{prop:lemmefondaFG}, we get the sharper upper-bound with an explicit smaller constant.
\begin{thm} \label{prop:refinement}Let $\mu$ and $\nu$ two probability measures on $([0,1]^d, {\cal B}or([0,1]^d),\lambda_d)$.  Then, for any integer $d\ge 2$,
\[
{\cal W}_1(\mu, \nu) \le \diamd   {2^{-\frac 1d}}   \left(\frac{3(d-1)}{2(1-2^{1-d})}\right)^{\frac{1}{d}}\frac{2d}{d-1} D^{\infty}(\mu,\nu)^{\frac 1d}.
\]
In particular, 
\[
{\cal W}_1(\mu, \nu) \le \diamd       \kappa_d D^{\star}(\mu,\nu)^{\frac 1d}
\quad \mbox{ with }\quad  
\kappa_d=2^{1-\frac 1d}\left(\frac{3(d-1)}{2(1-2^{1-d})}\right)^{\frac{1}{d}}\frac{2d}{d-1}.
\]
\end{thm}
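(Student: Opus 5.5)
The plan is to take $p=1$ in inequality \eqref{eq:refwp} from the proof of \cref{prop:WvsDisc}, keep $\ell_0$ as a free integer, and optimize over it. That inequality holds for general $\mu,\nu$ on $[0,1]^d$, via Step~2 of that proof, \cref{prop:lemmefondaFG}$(b)$ and \cref{lem:tech1}$(c)$. Unlike the proof of \cref{prop:WvsDisc}, I will not set $\ell_0=\ell^\star-1$. Instead I bound every term with $\min(2,2^{d\ell}D^\infty)\le 2^{d\ell}D^\infty$ and get, writing $D=D^\infty(\mu,\nu)$,
\[
{\cal W}_1(\mu,\nu)\le \diamd\Big(\tfrac32\sum_{\ell=1}^{\ell_0}2^{(d-1)\ell}D+2^{-\ell_0}\Big)\le \diamd\Big(A\,x^{d-1}D+\tfrac1x\Big),\qquad x=2^{\ell_0},\quad A=\frac{3}{2(1-2^{1-d})}.
\]
The second inequality uses the geometric sum $\sum_{\ell=1}^{\ell_0}2^{(d-1)\ell}\le 2^{(d-1)\ell_0}/(1-2^{1-d})$, which is where $d\ge2$ is needed.

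Next I minimize $\varphi(x)=A x^{d-1}D+x^{-1}$ over real $x>0$. The minimizer is $x^*=(A(d-1)D)^{-1/d}$, and the minimal value is $\frac{d}{d-1}(A(d-1)D)^{1/d}$. This already produces the factor $\big(\frac{3(d-1)}{2(1-2^{1-d})}\big)^{1/d}D^{1/d}$ that appears in the statement.

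The main obstacle is the integrality constraint $x=2^{\ell_0}$ with $\ell_0\ge0$. I will take $\ell_0=\lfloor\log_2 x^*\rfloor$, so that $x^*/2<x\le x^*$. Then $A x^{d-1}D\le A x^{*\,d-1}D=\frac{1}{(d-1)x^*}$ and $1/x<2/x^*$. Together these give
\[
{\cal W}_1(\mu,\nu)\le \diamd\,\frac{2d-1}{d-1}\,\frac1{x^*}\le \diamd\,\frac{2d}{d-1}\,(A(d-1)D)^{1/d}.
\]
If $x^*<1$, then $\ell_0=0$ and the trivial bound ${\cal W}_1\le\diamd$ applies. That bound is dominated by the right-hand side, because $(A(d-1)D)^{1/d}=1/x^*>1$.

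This rounding step falls short of the stated constant by a factor $2^{-1/d}$. To recover it, I will reuse the finer estimate \eqref{eq:boundellstar} from the proof of \cref{prop:WvsDisc}. I will also choose the rounding carefully, either by comparing the floor and ceiling choices or by using a non-geometric bound on the leading term of the sum, and track exactly where a factor $2^{1/d}$ can be saved. I expect this bookkeeping of constants to be the delicate part; the order $D^{1/d}$ itself is straightforward.

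Finally, the bound in terms of $D^\star$ follows from \eqref{eq:discbound}. Since $D^\infty\le 2^dD^\star$, we have $(D^\infty)^{1/d}\le 2(D^\star)^{1/d}$, which turns $2^{-1/d}$ into $2^{1-1/d}$ and yields $\kappa_d$.
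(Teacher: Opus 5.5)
Your route is the paper's: $p=1$ in \eqref{eq:refwp}, the geometric bound with $A=\frac{3}{2(1-2^{1-d})}$, and a dyadic choice of $2^{\ell_0}$ around a threshold. You also use the trivial bound ${\cal W}_1\le\diamd$ when the threshold is below $1$, and $D^\infty\le 2^dD^\star$ at the end. The gap is the constant.

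As written, your argument proves ${\cal W}_1\le\diamd\,\frac{2d-1}{d-1}\,(A(d-1)D)^{1/d}$. For every $d\ge2$ this is strictly weaker than the statement, because $\frac{2d-1}{d-1}>2^{-1/d}\frac{2d}{d-1}$ is equivalent to $2d(1-2^{-1/d})>1$. The left side increases in $d$ and already equals $4-2\sqrt2>1$ at $d=2$. Your last paragraph only announces that the missing factor $2^{1/d}$ will be recovered. The tools you mention there do not supply it: \eqref{eq:boundellstar} is exactly the two-sided rounding estimate you already used, and "choosing the rounding carefully" is precisely the step left undone.

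The missing idea is this. You put the rounding threshold at the minimizer $x^*$ of the idealized $\varphi$. You then bounded each term by its own worst case on $(x^*/2,x^*]$: $x\le x^*$ for the first term and $x>x^*/2$ for the second. These two worst cases are never attained together.

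The paper's fix is to decouple the threshold. For a free $y>0$, take $\ell_0=\lfloor\log_2 y\rfloor$, so that $2^{\ell_0}\in(y/2,y]$ and ${\cal W}_1\le\diamd\,(Ay^{d-1}D+2/y)$. Then minimize this rounded bound, not $\varphi$, over $y$. The minimizer is $y=2^{1/d}x^*$, and the value is $\frac{2d}{d-1}\,2^{-1/d}\,(A(d-1)D)^{1/d}$, which is exactly the stated constant. If $y<1$, use ${\cal W}_1\le\diamd$ instead. This is what the paper does through its parameter $a$, with $y=(2/(aD))^{1/d}$ and $a^\star=A(d-1)=\frac{3(d-1)}{2(1-2^{1-d})}$.

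Alternatively, you can keep your threshold $x^*$ and use the convexity of $\varphi$ on $(0,+\infty)$ for $d\ge2$. This gives $\varphi(2^{\ell_0})\le\max\big(\varphi(x^*/2),\varphi(x^*)\big)=\big(2+\frac{2^{1-d}}{d-1}\big)/x^*$. That constant is even below $2^{-1/d}\frac{2d}{d-1}$ for all $d\ge2$, since $2d\,2^{-1/d}\ge 2d-2\log 2$.

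With either fix, your remaining steps (reduction to general $\mu,\nu$ via Step~2 of the proof of \cref{prop:WvsDisc}, and passage to $D^\star$) go through as you wrote them.
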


\begin{rem} $\rhd$ One can check  that $\kappa_d\xrightarrow{d\rightarrow+\infty}4$. 

\noindent $\rhd$ The optimization {of $\ell_0$ proposed in the proof below} is not completely natural in view of the proof of~\cref{prop:WvsDisc} where the integer $\ell^\star$ is precisely defined to optimize the bounds. However, the definition of $\ell^\star$ involves an upper integer part which may have bad effects on the constants.

\noindent $\rhd$ Such a strategy may also be applied in the other cases which may slightly improve the results at the price of technicalities that we considered useless for the paper.
\end{rem}
\begin{proof} We only treat the case where $\mu((0,1]^d)=\nu(([0,1]^d)=1$. The extension to the general case can be done exactly as in the proof of~\cref{prop:WvsDisc}.  We start from~\eqref{eq:refwp} when $p=1$, namely
\begin{align*}
{\cal W}_1(\mu, \nu) & \le\diamd \left(\frac{3}{2}\,\sum_{\ell=1}^{\ell_0} 2^{(d-1)\ell}   D^{\infty}(\mu,\nu)+2^{-\ell_0}\right).
\end{align*}
We introduce a parameter $a\ge 1$ and define $\ell_0^{(a)}$ by
\[
\ell_0^{(a)}=\bigg\lceil \frac{\log (2/(aD^{\infty}(\mu,\nu)))}{d\log 2} \bigg\rceil-1.
\]
Note that $\ell_0^{(a)}=\ell^\star-1$ when $a=1$. In order to ensure that $\ell_0\ge 0$, we first assume that
\begin{equation}\label{eq:conditiona}
 \frac{2}{aD^{\infty}(\mu,\nu)}>1.
 \end{equation}
 In this case, following the strategy of the proof of~\cref{prop:WvsDisc} when $p<d$ with $\ell_0=\ell_0^{(a)}$ (instead of $\ell_0=\ell^\star-1$), we get
 \begin{equation*}
{\cal W}_1(\mu, \nu)  
\le  \diamd 2^{-\frac{1}{d}}\left(\frac{3}{1-{2^{1-d}}} a^{\frac{1}{d}-1}+ 2 a^{\frac{1}{d}}\right)D^{\infty}(\mu,\nu)^{\frac 1d}.
\end{equation*}
This suggests to minimize the function $h$ defined by:
$$ h(a)= \frac{3}{1-2^{1-d}} a^{\frac{1}{d}-1}+ 2 a^{\frac{1}{d}}.$$
One checks that this functions attains its minimum at the point 
$$a^\star=\frac{3(d-1)}{2(1-2^{1-d})}\quad \textnormal{and that}\quad h(a^\star)=\left(\frac{3(d-1)}{2(1-2^{1-d})}\right)^{\frac{1}{d}}\left(\frac{2}{d-1}+2\right).$$
The result follows if $a^\star$ satisfies the condition \eqref{eq:conditiona}. Otherwise, 
$$ D^\infty(\mu,\nu)\ge \frac{2}{a^\star}=\frac{4(1-2^{1-d})}{3(d-1)},$$
and using that ${\cal W}_1(\mu,\nu)\le \diamd$, we get
$$
{\cal W}_1(\mu,\nu)\le \diamd \le \diamd \left(\frac{3(d-1)}{4(1-2^{1-d})}\right)^{\frac{1}{d}}D^\infty(\mu,\nu)^\frac{1}{d}\le  \diamd 2^{-\frac{1}{d}} h(a^\star)D^\infty(\mu,\nu)^\frac{1}{d}.$$
The previous bound is thus still available in this case.
 \end{proof}
 
\subsection{Connections with QMC \& MC methods} \label{sec:proinov}  The discrepancy is mostly used in the theory of uniformly distributed sequences  and their applications to Quasi-Monte Carlo simulation ({\em QMC}).  A distribution $\nu$ being fixed on $[0,1]^d$ --~mostly the uniform distribution ${\cal U}([0,1]^d) $~-- it is commonly used to measure the  way the empirical measure induced by a $\big([0,1]^d\big)^n$-valued $n$-tuple $(\xi_k)_{k=1,\ldots,n}$ approximates the original measure $\nu$. To be more precise one considers
\[
D^{\star,\nu}\big((\xi_k)_{k=1,\ldots,n}\big) := D^{\star}\Big( \frac 1n \sum_{k=1}^n \delta_{\xi_k}, \nu\Big)
\] 
or its counterpart $D^{\infty,\nu}\big((\xi_k)_{k=1,\ldots,n}\big)$ defined accordingly w.r.t. $D^{\infty}$. For  an introduction to  {\em QMC} methods in Numerical Probability, we refer among others to~\cite{Nied1992} or~\cite{GilPag2026}. One important theoretical result in this field is Pro\"inov's theorem (see~\cite{proinov1988}) which can be formulated as follows.
\begin{prop}[Pro\"inov Theorem (1984)] Let $d$, $n \!\in \N$ and let   $\big([0,1]^d\big)^n$-valued $n$-tuple $(\xi_k)_{k=1,\ldots,n}$. Then there exists a real constant $C_d\!\in [1,4]$ such that 
\[
{{\cal W}_1^{\ell^{\infty}}\Big( \frac 1n \sum_{k=1}^n \delta_{\xi_k}, {\cal U}([0,1]^d) \Big)\le C_d D^{\star, {\cal U}([0,1]^d)}\big((\xi_k)_{k=1,\ldots,n}\big)^{\frac 1d},}
\]
where ${\cal W}_1^{\ell^{\infty}}$ denotes the  Wasserstein distance w.r.t. the $\ell^\infty$-norm on $[0,1]^d$. Moreover when $d=1$, then $C_1=1$ and both error moduli attain their minimum, $n$ being fixed, at $\Big(\frac{2k-1}{2n}\Big)_{k=1,\ldots,n}$ with a common resulting value $\frac{1}{2n}$.
\end{prop}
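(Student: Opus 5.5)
The plan is to treat the two dimensional regimes separately. The case $d=1$ is essentially already in hand. For $d\ge 2$, the constant produced by Theorem~\ref{prop:refinement} is not good enough for small $d$, so a sharper, Pro\"inov-type argument is needed to bring it down to $4$.

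\emph{Case $d=1$.} The inequality with $C_1=1$ is exactly Proposition~\ref{prop:1DW1Disc} applied with $\mu=\frac1n\sum_k\delta_{\xi_k}$ and $\nu={\cal U}([0,1])$. On $\R$, $\ell^\infty$ is just the absolute value. For the extremal statement I will use two facts.
\begin{itemize}
\item The classical identity $D^\star=\frac1{2n}+\max_k\big|\xi_{(k)}-\frac{2k-1}{2n}\big|$, where $\xi_{(k)}$ are the order statistics. It shows that $D^\star\ge \frac1{2n}$, with equality exactly at the midpoint configuration.
\item The quantile representation ${\cal W}_1=\int_0^1|F_\mu^{-1}(u)-u|\,du$. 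On each cell $((k-1)/n,k/n]$ it is minimised when $\xi_{(k)}$ is the cell midpoint, which is again the midpoint configuration.
\end{itemize}
Finally I will compare the two resulting values against the ones announced in the statement.

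\emph{Lower bound $C_d\ge1$.} I will use a single-atom test. Take all $\xi_k=0$. Then
\[
D^\star=\sup_x\big(1-\textstyle\prod_i x^i\big)=1,
\qquad
{\cal W}_1^{\ell^\infty}=\E\,\max_i U^i=\tfrac{d}{d+1}.
\]
This gives a ratio below $1$, so it does not force $C_d\ge 1$ by itself. I will therefore read ``$C_d\in[1,4]$'' as an admissible choice: any $C_d\le 4$ that works may be enlarged to $\max(C_d,1)$. The real content is the upper bound $4$.

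\emph{Case $d\ge 2$: upper bound $4$.} The starting point is the Kantorovich--Rubinstein duality already used in Proposition~\ref{prop:1DW1Disc}. It reduces the claim to showing, for every $f$ that is $1$-Lipschitz for $|\cdot|_\infty$,
\[
\Big|\frac1n\sum_k f(\xi_k)-\int f\,d\lambda_d\Big|\le 4D^{1/d},
\qquad D:=D^{\star,{\cal U}([0,1]^d)}.
\]
Theorem~\ref{prop:refinement} with the $\ell^\infty$ norm gives $\diamd=1$, but $\kappa_d>4$ for small $d$. So I will not go through the dyadic Dereich--Fournier--Guillin coupling and will follow Pro\"inov's modulus-of-continuity route instead:
\begin{enumerate}
\item Fix an integer $m$. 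Partition $[0,1]^d$ into the $m^d$ cells $Q$ of side $1/m$, and let $f_m$ equal $f$ at the centre of each cell. Then $\|f-f_m\|_\infty\le \frac1{2m}$, which costs at most $\frac1m$ in total over the two measures.
\item Write $f_m$ as a layer-cake sum $f_m=\int {\bf 1}_{\{f_m>t\}}\,dt$. Because $f$ is $1$-Lipschitz in $\ell^\infty$, adjacent cells differ by at most $1/m$. Hence each super-level set, up to an error set of total mass $O(1/m)$ after integrating in $t$, is a union of cells that can be written as a signed combination of few anchored boxes $[\![0,x]\!]$. The aim is a number of boxes that, after integrating in $t$, contributes at most of order $m^{d}\cdot\frac1m$ discrepancy terms, each bounded by $D$.
\item This should yield $|\mu(f)-\lambda_d(f)|\le \frac{a}{m}+b\,m^{d-1}D$. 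Choosing $m\approx D^{-1/d}$, with care for the integer part as in the remark following Theorem~\ref{prop:refinement}, gives a constant $a+b$, and the target is $a+b\le4$.
\end{enumerate}

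\emph{Main obstacle.} The decomposition in step 2 is the crux: controlling the boundary of level sets with a count sharp enough to give total constant $\le 4$ uniformly in $d$. My first attempt would be to exploit monotone rearrangement in one coordinate direction, slicing along $x^d$. The level-set boundary then becomes a graph over $m^{d-1}$ columns, each column contributing one anchored-box difference with weight $1/m$. This seems the most plausible way to reach $b\le 2$, $a\le 2$. If that fails, I would fall back on citing~\cite{proinov1988} for the numerical value $4$.
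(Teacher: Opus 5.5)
\textbf{The case $d=1$.} Your proof of the inequality is exactly the paper's route (Proposition~\ref{prop:1DW1Disc}), and reading ``$C_d\in[1,4]$'' as allowing you to enlarge any admissible constant to at least $1$ is fine. But carry out the quantile computation you propose. At the midpoint configuration,
${\cal W}_1=\sum_{k=1}^n\int_{(k-1)/n}^{k/n}\big|u-\frac{2k-1}{2n}\big|\,du=n\cdot\frac{1}{4n^2}=\frac{1}{4n}$.
Already for $n=1$, $\int_0^1|u-\frac12|\,du=\frac14$. Both moduli are indeed uniquely minimised at $(\frac{2k-1}{2n})_k$, but only $D^\star$ takes the value $\frac1{2n}$ there. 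The ``common resulting value $\frac1{2n}$'' is therefore false for ${\cal W}_1$ and cannot be proved. You should state this outright rather than postpone it to a final ``comparison''. The same slip recurs in the paper's remark after Theorem~\ref{thm:boundDbyW}.

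\textbf{The case $d\ge2$.} Here there is a genuine gap. Step~2 is the whole theorem, it is only an aspiration, and two of its devices fail.
\begin{itemize}
\item Monotone rearrangement in $x^d$ is not admissible, because $\frac1n\sum_k f(\xi_k)$, unlike $\lambda_d(f)$, is not invariant under rearranging $f$. It is also unnecessary. Along a column of cells with values $v_1,\dots,v_m$ and $|v_j-v_{j-1}|\le\frac1m$, the number of maximal runs of $\{f_m>t\}$ integrates over $t\ge\min f_m$ to $(v_1-\min f_m)+\sum_{j\ge2}(v_j-v_{j-1})^+\le 2$.
\item More seriously, a run is a box $Q'\times(\alpha,\beta]$ with $Q'$ a $(d-1)$-dimensional cell away from the origin. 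It is not ``one anchored-box difference'': its discrepancy is only bounded by $D^\infty\le 2^dD^\star$, or $2^{d-1}D^\star$ per endpoint. So with $m\approx D^{-1/d}$ your $b$ is of order $2^d$, not $\le 2$.
\end{itemize}
Optimising $\frac am+bm^{d-1}D$ in $m$ instead gives $\frac{d}{d-1}\big((d-1)a^{d-1}bD\big)^{1/d}$, which tames $b$ to $b^{1/d}\approx 2$. Even so, the naive count $a=1$, $b=2^{d+1}$ yields $4\sqrt2\,D^{1/2}\approx 5.7\,D^{1/2}$ for $d=2$, before rounding $m$ to an integer. Reaching $4$ for every $d$ needs a sharper bookkeeping that you do not supply.

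Neither does the paper: the proposition is quoted from \cite{proinov1988}. What the paper proves itself is the $d=1$ case and, via Theorem~\ref{prop:refinement} with $\diamd=1$ and $D^\infty\le 2^dD^\star$, the inequality with $\kappa_d$ in place of $C_d$. But $\kappa_d>4$ for every $d\ge2$ ($\kappa_2\simeq 9.80$), and it tends to $4$ only as $d\to\infty$. As it stands, your $d\ge 2$ argument therefore reduces to your fallback, the citation, which is exactly the paper's position.
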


\begin{rem}\label{rem:universality} $\bullet$ The definition of the star discrepancy in~\cite{proinov1988} is slightly different from that of $D^\star (\mu,\nu)$, namely $\sup_{x,\, y\in [0,1]^d} \big| \mu\big([\![0,y[\![\big) -  \nu\big([\![0,x[\![\big)\big|$,  but  this modulus turns out to be lower or equal to $D^\star (\mu,\nu)$ using arguments similar to those used to prove Lemma~\ref{lem:tech1}.

\smallskip 
\noindent  $\bullet$  Since $D^\infty\le 2^d D^\star$, we can compare the improved general constant $\kappa_d$ from~\cref{prop:refinement} the  (bounds known on) constant  $C_d$ appearing in Pro\"inov's Theorem (which holds in a more restricted framework),  having in mind that  ${\diamd =1}$ for the $\ell^{\infty}$-norm. Let us recall that this constant $\kappa_d$  is given by 
\begin{align*}
\kappa_d=2^{1-\frac 1d}\left(\frac{3(d-1)}{2(1-2^{1-d})}\right)^{\frac{1}{d}}\frac{2d}{d-1} \longrightarrow 4 \quad \mbox{ as }\quad d\to +\infty. 
\end{align*}
Numerical computations for medium values of $d$ are as follows:  if  $d=2$,  $\kappa_2\simeq 9.7980$,   if $d =3$,  $\kappa_3\simeq 7.5595$, if $d=4$,  $\kappa_4\simeq     6.7537$,     if $d=5$,  $\kappa_5\simeq 6.3096$, if $d=6$,  $\kappa_6\simeq     6.0147$, if $d=7$,  $\kappa_7\simeq     5.7983$,      if $d=8$,  $\kappa_8\simeq 5.6299$,  if $d=9$,  $\kappa_9\simeq    5.4937$,  if $d=10$,  $\kappa_{10}\simeq    5.3806$,     if $d=11$,  $\kappa_{11}\simeq    5.2850$,  if $d=12$,  $\kappa_{12}\simeq        5.2028$,  if $d=20$,  $\kappa_{20} \simeq  4.8087$. if $d=50$,  $\kappa_{50} \simeq  4.3867$,  if $d=100$,  $\kappa_{100} \simeq 4.2182$.

\smallskip 
\noindent  $\bullet$ Our constants are thus slightly larger than those of the original theorem (which  lie into $[1,4]$) but it is worth noting that our bounds are universal: they do not hold only for the uniform distribution {${\cal U}([0,1]^d)$ but also for any distribution $\nu$ on $[0,1]^d$} (and {any}  empirical measure).
\end{rem}

\medskip
\noindent {\bf Toward a Law of Iterated Logarithm.}  Let $(U_n)_{n\ge 1}$ be an i.i.d. sequence of uniformly distributed vectors on $[0,1]^d$. Then \ Chung's Law of Iterated Logarithm (see~\cite{Chung49}) for the star discrepancy reads
\[
\varlimsup_n \sqrt{\frac{2n}{\log\log n}}D^\star(U_1, \ldots, U_n)= 1 \quad \P\mbox{-}a.s.
\]
Combining this results with that of  Corollary~\ref{cor:discempiric} yields that, if $d\ge 2$ and $p\neq d$ or ($d=1$),  then there exists a real constant $K_{p,d}$ only depending on $p$, $d$ such that, under the assumptions of this  corollary
\[
\varlimsup_n \bigg({\frac{2n}{\log\log n}}\bigg)^{\frac 12(\frac 1p\wedge \frac 1d)}{\cal W}_p\Big(\frac1n \sum_{k=1}^n \delta_{U_k}, {\cal U}([0,1]^d)\Big)\le  K_{p,d} \quad \P\mbox{-}a.s.
\]
where $K_{p,d}$ is a finite real constant from Corollary~\ref{cor:discempiric}.

\subsection{Bounding {the} star discrepancy  by  {the}  $L^1$-Wasserstein distance}
We refer to Section~\ref{subsec:KSvsW_1unbound} devoted to Kolmogorov-Smirnov distance between  distributions with possibly unbounded supports. Note that these bounds require that at least one of the two distributions under consideration is absolutely continuous. The obtained bound cannot be improved in the case of $[0,1]^d$-supported distributions, at least in  a reasonably general framework.

\section{Kolmogorov-Smirnov distance {\em vs}  {$p$-}Wasserstein distance on ${\cal P}_p(\R^d)$}\label{sec:3}

\subsection{Bounding the {$p$-}Wasserstein distance by the $K$--$S$ distance} \label{subsec:3.1}We consider now probability distributions on the whole space $\R^d$ and we straightforwardly update the definitions of the star and uniform   discrepancies. The first one is then also known as the {\em Kolmogorov-Smirnov distance} ($K$--$S$ distance). This section allows to treat the  $[0,1]^d $-supported distributions but with worse constants where the $K$--$S$ distance is commonly known as (star) discrepancy.

\begin{defn} Let $\mu$ and $\nu$ two probability measures on $([0,1]^d, {\cal B}or([0,1]^d),\lambda_d)$. We define the {\em Kolmogorov -Smirnov distance}, denoted $K$--$S$, by
\[
D^{\star}(\mu,\nu) =Ê\sup_{x\in \R^d} \big| \mu\big(]\!]-\infty,x]\!]\big) -  \nu\big((]\!]-\infty,x]\!]\big)\big|,
\]
where, by an abuse of notation, we also denote $-\mathbf{\infty} = (-\infty, \ldots, -\infty)$. This distance can be simply seen as  {\em star discrepancy}  defined in a more general setting. We also define the {\em uniform discrepancy} between $\mu$ and $\nu$ by
\[
D^{\infty}(\mu,\nu) =Ê\sup_{x,\, y\in \R^d} \big| \mu\big([\![x,y]\!]\big) -  \nu\big([\![x,y]\!]\big)\big|.
\]
\end{defn}
One easily checks like in Lemma~\ref{lem:tech1} that, with these definitions,
\begin{equation}\label{eq:alterdicunif}
D^{\infty}(\mu,\nu) =Ê\sup_{x,y \in \R^d} \big| \mu\big(]\!]x,y]\!]\big) -  \nu\big(]\!]x,y]\!]\big)\big|
\end{equation}
since $\mu ([\![x,y]\!])= \lim_n\mu (]\!]x-\mbox{\bf1}/n,y]\!])$ and that  the bounds~\eqref{eq:discbound} 
\begin{equation}\label{eq:dinftyvsdstar}
D^\star\le D^\infty\le 2^d D^\star
\end{equation}
between these quantities still hold.

The following Proposition, which is the combination of Lemmas~5 and~6 from~\cite{FG}, is the key result on which we rely in this {section}. {We set:}
$$ {\cal B}_n=(-2^{-n},2^{-n}]^d\backslash (-2^{-(n-1)},2^{-(n-1)}]^d.$$
}
\begin{prop} \label{prop:upperWunbounded}Let $p\!\in (0, +\infty)$ and let $d\ge1$.  There exists a positive constant $K_{p,d}$ such that for every pair   $(\mu,\nu)\in{\cal P}_p(\R^d)^2$,
\begin{equation}\label{eq:W-major}
{\cal W}_p^p(\mu,\nu) \le K_{p,d} \sum_{n\ge 0}2^{pn} \sum_{\ell\ge 0}2^{-p\ell}\sum_{F\in {\cal P}_{\ell}}\big| \mu(2^nF\cap \B_n)Ê- \nu(2^nF\cap \B_n)Ê\big|,
\end{equation}
where $2^nF = \{2^nx, \; x\!\in F\}$, ${\cal P}_0= \{ (-1,1]^d\}$ and, for every $\ell\ge 1$,  
$$
{\cal P}_\ell= \Big\{a+(-2^{-\ell}, 2^{-\ell}]^d,\; a = \frac{2{\bf k}+\mbox{\bf 1}}{2^{\ell}},\,{\bf k}\in\{-2^{\ell-1},\ldots-1,0,1,\ldots, 2^{\ell-1}-1\}^d  \Big\}.
$$
 Note that   ${\rm card}({\cal P}_\ell)= 2^{d\ell}$ and (with obvious notation)  ${\rm card}(2^n{\cal P}_\ell\cap  {\cal B}_n)= 2^{d(\ell-1)^+}$.
\end{prop}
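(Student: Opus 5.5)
The plan is to follow the two-scale strategy of Fournier--Guillin: first decompose $\R^d$ into the dyadic shells ${\cal B}_n$, transport within each shell using the multiscale coupling bound of \cref{prop:lemmefondaFG}, and pay separately for the mass that has to move between shells. One remark on notation before starting. For the right-hand side to cover $\R^d$, the rescaled shell has to be read as $2^n{\cal B}_0$-type annuli: with ${\cal B}_0=(-1,1]^d$ and ${\cal B}_n=(-2^n,2^n]^d\setminus(-2^{n-1},2^{n-1}]^d$ for $n\ge1$. Then the sets $2^nF\cap{\cal B}_n$, $F\in{\cal P}_\ell$, partition ${\cal B}_n$ into cubes of side of order $2^{n-\ell}$, consistent with the weight $2^{pn}2^{-p\ell}$. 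I will work with this reading; the constant $K_{p,d}$ absorbs all harmless factors.

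\textbf{Step 1 (shell decomposition).} Write $\mu=\sum_n\mu|_{{\cal B}_n}$ and $\nu=\sum_n\nu|_{{\cal B}_n}$. Following Lemma~6 of \cite{FG}, set $m_n=\mu({\cal B}_n)\wedge\nu({\cal B}_n)$. The transport splits into two parts.

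\emph{(i) Within-shell part.} Couple the normalized restrictions $\mu_n=\mu|_{{\cal B}_n}/\mu({\cal B}_n)$ and $\nu_n=\nu|_{{\cal B}_n}/\nu({\cal B}_n)$ on each shell, with mass $m_n$.

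\emph{(ii) Excess part.} The leftover mass $|\mu({\cal B}_n)-\nu({\cal B}_n)|$ is coupled arbitrarily with leftover mass in other shells. Any point of ${\cal B}_n$ has norm at most $C_d2^n$, so $|x-y|^p\le 2^{p-1}(|x|^p+|y|^p)$ gives a cost bounded by $C\sum_n 2^{pn}|\mu({\cal B}_n)-\nu({\cal B}_n)|$. This is exactly the $\ell=0$ term of the right-hand side, since ${\cal P}_0=\{(-1,1]^d\}$ and $2^n(-1,1]^d\cap{\cal B}_n={\cal B}_n$.

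\textbf{Step 2 (within-shell cost).} Rescale ${\cal B}_n$ by $2^{-n}$ into $(-1,1]^d$ and apply the $(-1,1]^d$ version of \eqref{eq:FG2} (Lemma~5 of \cite{FG}) to $\mu_n,\nu_n$. Scaling by $2^n$ multiplies the cost by $2^{pn}$, giving
\[
m_n{\cal W}_p^p(\mu_n,\nu_n)\le C\,2^{pn}\, m_n\sum_{\ell\ge1}2^{-p\ell}\sum_{F\in{\cal P}_\ell}\big|\mu_n(2^nF)-\nu_n(2^nF)\big|.
\]

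\textbf{Step 3 (main obstacle: removing the normalization).} We need to pass from normalized to unnormalized masses. Suppose without loss of generality that $m_n=\nu({\cal B}_n)\le\mu({\cal B}_n)$. Then
\[
m_n|\mu_n(A)-\nu_n(A)|\le|\mu(A)-\nu(A)|+\mu(A)\,\frac{|\mu({\cal B}_n)-\nu({\cal B}_n)|}{\mu({\cal B}_n)}.
\]
Summing over $F\in{\cal P}_\ell$, the second term contributes at most $|\mu({\cal B}_n)-\nu({\cal B}_n)|$ per level $\ell$. After multiplying by $2^{-p\ell}$ and summing over $\ell$, it is bounded by a constant times the $\ell=0$ term. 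The delicate point is to check that the cross-shell mismatch is not counted twice, and that the sub-couplings glue into a genuine coupling of $\mu$ and $\nu$. I would handle this as in the $\varepsilon$-perturbation argument of Step~2 of \cref{prop:lemmefondaFG}, which also takes care of shells where $\mu({\cal B}_n)=0$ or $\nu({\cal B}_n)=0$.

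\textbf{Step 4 (conclusion).} Adding the within-shell and excess costs gives \eqref{eq:W-major}. Finiteness of the $p$-moments is what makes the series meaningful, since $\sum_n2^{pn}\mu({\cal B}_n)\lesssim\int|x|^p\,d\mu+1$. The cardinality claims are immediate: ${\rm card}({\cal P}_\ell)=2^{d\ell}$ by construction, and for ${\rm card}(2^n{\cal P}_\ell\cap{\cal B}_n)$ one counts the dyadic cubes of side $2^{-\ell+1}$ that meet the annulus.
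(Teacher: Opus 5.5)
Your argument is correct and is essentially the proof the paper relies on. The paper gives no proof of its own; it cites Lemmas~5 and~6 of \cite{FG}, and your plan reconstructs exactly that argument: Lemma~5 in each rescaled shell, plus the renormalization and inter-shell mass bookkeeping of Lemma~6. Your reading ${\cal B}_0=(-1,1]^d$, ${\cal B}_n=(-2^n,2^n]^d\setminus(-2^{n-1},2^{n-1}]^d$ correctly repairs the typo in the paper's definition of ${\cal B}_n$.

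Two small points. First, the gluing is not delicate and needs no $\varepsilon$-perturbation: $\sum_n m_n\pi_n$ plus any coupling of the leftover masses already has marginals $\mu$ and $\nu$, and Lemma~5 of \cite{FG} holds for all pairs on $(-1,1]^d$. Second, the cardinality count is not as stated: for $\ell\ge2$, $n\ge1$, actually $(2^d-1)2^{d(\ell-1)}$ cubes of $2^n{\cal P}_\ell$ meet ${\cal B}_n$, and $2^{d(\ell-1)}$ is the number that miss it. This is harmless for the inequality.
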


\begin{thm}\label{prop:vitessesur Rd}Let $q>p$ and $\mu$, $\nu$ two probability measures with finite $q$-moments. 
There exist real constants $\kappa_{p,q,d}>0$ such that 
\begin{equation}\label{eq:boundWpbyKS}
{\cal W}_p^p (\mu,\nu)\le \kappa_{p,q,d}\big(M_{\frac{\mu+\nu}{2}}(q) \vee 1)\left\{\begin{array}{ll}
 D^\infty(\mu,\nu)^{\frac pd}  &\mbox{ if }p<\frac{dq}{q+d}\\
 D^\infty(\mu,\nu)^{1-\frac pq}  &\mbox{ if } \frac{dq}{q+d}<p \mbox{ and }p\neq d
 \end{array}
 \right.
\end{equation}
where $M_{\frac{\mu+\nu}{2}}(q)=\frac 12 \int_{\R^d} |\xi|^q (\mu+\nu)(d\xi)$. 
\end{thm}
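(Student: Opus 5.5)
The plan is to start from \cref{prop:upperWunbounded} and bound each inner sum in two ways, by the uniform discrepancy and by the $q$-moments, then optimise the cut-offs in the double sum over $(n,\ell)$.

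First I will identify the sets involved. For $n\ge0$ and $F\in{\cal P}_\ell$, the set $2^nF\cap\B_n$ is a difference of semi-open boxes, hence a union of at most $2^d$ semi-open boxes; I read $\B_n$ as the dyadic shell at scale $2^n$. Two estimates follow.

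\emph{(i) Discrepancy bound.} By \eqref{eq:alterdicunif},
\[
\big|\mu(2^nF\cap\B_n)-\nu(2^nF\cap\B_n)\big|\le c_d D^\infty(\mu,\nu).
\]
Since only ${\rm card}(2^n{\cal P}_\ell\cap\B_n)\le 2^{d(\ell-1)^+}$ cells are nonempty, the inner sum is at most $c_d2^{d\ell}D^\infty$.

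\emph{(ii) Moment bound.} The inner sum is at most $\mu(\B_n)+\nu(\B_n)$. On $\B_n$ we have $|\xi|\ge c\,2^{n-1}$, so Markov's inequality gives
\[
\mu(\B_n)+\nu(\B_n)\le C\,2^{-qn}M_{\frac{\mu+\nu}2}(q)
\]
for $n\ge1$. The inner sum is also trivially at most $2$.

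Writing $D=D^\infty(\mu,\nu)$ and $M=M_{\frac{\mu+\nu}{2}}(q)\vee1$, these combine into
\[
{\cal W}_p^p\le K\sum_{n\ge0}2^{pn}\sum_{\ell\ge0}2^{-p\ell}\min\big(2^{d\ell}D,\;2^{-qn}M\big).
\]

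Next I sum over $\ell$ for fixed $n$, exactly as in \cref{prop:WvsDisc}. Set $A_n=2^{-qn}M$.

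When $p<d$, I cut at $2^{d\ell}\approx A_n/D$. This gives $\lesssim D^{p/d}A_n^{1-p/d}$, provided $A_n\ge D$; otherwise the whole $\ell$-sum is bounded by $A_n$.

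When $p>d$, the $\ell$-sum is $\lesssim \min(D,A_n)$ directly.

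The case $p=d$ would produce a logarithm, which is why it is excluded. The statement excludes it only in the second regime, though. In the first regime, $p<dq/(q+d)<d$ automatically.

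Then I sum over $n$, splitting at $n^*$ defined by $2^{-qn^*}M\approx D$.

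For $n\le n^*$ and $p<d$, the terms are
\[
2^{pn}D^{p/d}M^{1-p/d}2^{-qn(1-p/d)} = D^{p/d}M^{1-p/d}\,2^{n(p-q+qp/d)}.
\]
The exponent $p-q+qp/d$ is negative iff $p<\frac{dq}{q+d}$. In that case the sum is dominated by $n=0$, giving $D^{p/d}M^{1-p/d}\le D^{p/d}M$; the tail $n>n^*$ is then lower order. If instead the exponent is positive, the sum is dominated by $n=n^*$, where $2^{n^*}\approx (M/D)^{1/q}$. This gives
\[
D^{p/d}M^{1-p/d}(M/D)^{(p-q+qp/d)/q}=M^{p/q}D^{1-p/q}.
\]

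For $n>n^*$ the terms are $2^{pn}A_n$, whose sum is $\approx 2^{(p-q)n^*}M\approx M^{p/q}D^{1-p/q}$, using $q>p$.

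For $p>d$ the same split gives $\sum_{n\le n^*}2^{pn}D+\sum_{n>n^*}2^{(p-q)n}M\approx M^{p/q}D^{1-p/q}$. This matches the second case, since $d<dq/(q+d)$ is false, so $p>d$ lies in that regime.

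Finally, $M^{p/q}\le M$ because $M\ge1$. The integer-part issues in choosing $n^*$ and $\ell$, and the case $D\ge M$ where $n^*\le0$, are absorbed into the constant $\kappa_{p,q,d}$ using the trivial bound ${\cal W}_p^p\lesssim M$.

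The main obstacle I expect is the careful bookkeeping of the two regimes within the double sum: making sure the $\ell$-optimisation for $p<d$ is valid only for $n\le n^*$, and controlling the boundary terms. Also at risk is the precise geometric claim that $2^nF\cap\B_n$ is a union of boundedly many boxes (it is a difference of boxes). I would handle that by the inclusion–exclusion $\mu(A\setminus B)=\mu(A)-\mu(A\cap B)$, with $A\cap B$ a box, so the constant is $2$.
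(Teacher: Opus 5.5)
Your proposal is correct and follows essentially the same route as the paper: Proposition~\ref{prop:upperWunbounded}, the two-sided bound $\min\big(2^{d\ell}D^\infty,\,C2^{-qn}M\big)$ with $M=M_{\frac{\mu+\nu}{2}}(q)\vee1$ on each inner sum, the summation in $\ell$ as in the paper's auxiliary lemma, and a split of the $n$-sum at $n^\star$ with $2^{-qn^\star}M\approx D^\infty$, which yields the same regimes. Two of your steps are slightly more careful than the written proof. First, the cells $2^nF\cap\B_n$ with $\ell\in\{0,1\}$ are differences of boxes, which costs a factor $2$; the paper applies \eqref{eq:alterdicunif} to every nonempty cell as if it were a box. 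Second, for $p>d$ you evaluate $\sum_{n\le n^\star}2^{pn}D^\infty\lesssim M^{p/q}(D^\infty)^{1-p/q}$ correctly, whereas the paper replaces this sum by $n^\star D^\infty$; the final rate is unchanged.
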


\noindent {\bf Remarks.}  $\bullet$ The above result can be partially summed up into: if $p\neq d$ and $p\neq \frac{dq}{d+q}$ then
\[
{\cal W}_p^p (\mu,\nu)\le \kappa_{p,q,d} \big(M_{\frac{\mu+\nu}{2}}(q) \vee 1)D^\infty(\mu,\nu)^{\frac pd\wedge (1-\frac pq)}.
\]
This is in line with what was obtained for $[0,1]^d$-supported distributions (for which $q=+\infty$).

\smallskip
\noindent $\bullet$ If $p=d$ our approach fails to provide a direct bound. However , for every $\varepsilon\!\in (0, q-d)$, one has 
\begin{equation}\label{eq:boundWpbyKSL1}
{\cal W}_p(\mu,\nu)\le {\cal W}_{d+\varepsilon}(\mu,\nu)\le \kappa_{1+\ve,q,d}^{\frac{1}{d+\ve}} \big(M_{\frac{\mu+\nu}{2}}(q) \vee 1)^{\frac{1}{d+\ve}}D^\infty(\mu,\nu)^{\frac{1}{d+\ve}-\frac1q}.
\end{equation}

\noindent $\bullet$ However, in one dimension,  a specific approach is possible based on the representation formula (see \emph{e.g.} \cite{villani_topics})
$$
{\cal W}_1(\mu,\nu)=\int_{-\infty}^{\infty} |F_\mu(x)-F_\nu(x)| dx,
$$
where $F_\mu$ and $F_\nu$ denote the cumulative distribution functions of $\mu$ and $\nu$ respectively.
Then, for every $a>0$, 
\begin{align}
\nonumber {\cal W}_1(\mu,\nu)&\le \int_{-\infty}^{-a} F_\mu(x)+F_\nu(x) dx+ 2 a D^\star(\mu,\nu)+ \int_{a}^{+\infty} (1-F_\mu(x))+(1-F_\nu(x)) dx\\
\label{eq:MAjoW1}&\le \int_a^{+\infty} \PE(|X|>x)+\PE(|Y|>x) dx+2 a D^\star(\mu,\nu)
\end{align}
where $X$ and $Y$   {are $\mu$ and $\nu$}-distributed respectively. If $X$ and $Y$ both have finite moments of order  $q$, one has 
$$
\int_a^{+\infty} \PE(|X|>x) dx\le \ES[|X|^q] \int_a^{+\infty} x^{-q} dx \le \ES[|X|^q] a^{1-q},
$$
which yields after an obvious optimization
$$ 
{\cal W}_1(\mu,\nu)\le 2M_{\frac{\mu+\nu}{2}}(q) D^\star(\mu,\nu)^{1-\frac{1}{q}}.
$$

 \noindent $\bullet$ If $\mu$ and $\nu $ have exponential moments  in the sense that $\int_{\R} e^{\lambda|\xi|} (\mu+\nu)(d\xi)<+\infty$ for some $\lambda>0$, then it follows from~\eqref{eq:MAjoW1}  that,
 \[
{\cal W}_1(\mu,\nu)\le \int_{\R} e^{\lambda|\xi|} (\mu+\nu)(d\xi) \frac{e^{-\lambda a}}{\lambda} +2 a D^\star(\mu,\nu)
\]
Setting $a= \frac{1}{\lambda} \log(1/D^\infty(\mu,\nu))$ yields 
\[
{\cal W}_1(\mu,\nu)\le \frac{1}{\lambda} \bigg(\int_{\R} e^{\lambda|\xi|} (\mu+\nu)(d\xi) D^\star(\mu,\nu)+ 2  D^\star(\mu,\nu)\log(1/D^\infty(\mu,\nu))\bigg).
\]
\noindent $\bullet$ If $\mu_n $, $n\ge 1$ are   (uniform) empirical measures of the form $\mu = \mu_n = \frac1n \sum_{k=1}^n \delta_{\xi^n_k}$ here $\xi^n_1,\ldots, \xi^n _n\!\in \R^d$. Then 
$$
M_{\mu_n}(q),\; n\ge 1, \mbox{ is bounded  iff  } \sup_{n\ge 1} \frac 1n \sum_{k=1}^n |\xi^n_k|^q  <+\infty.
$$

\noindent {\em Proof.}  {\sc Step~1}.  Let $\ell\ge 1$. It is clear that $\frac{2k+\mbox{\bf 1}}{2^{\ell}}+(-2^{-\ell}, 2^{-\ell}]^d=]\!]k 2^{-\ell}, (k+\mbox{\bf 1})2^{-\ell} ]\!]$  is a semi-open box so that, for every $F\!\in {\cal P}_\ell$, either $2^nF \cap  {\cal B}_n = \varnothing$  for $2^{d(\ell-1)}$ semi-open boxes or, for the $(2^d-1)2^{d(\ell-1)}= 2^{d\ell}(1-2^{-d})$  others
\[
\big| \mu(2^nF\cap \B_n)Ê- \nu(2^nF\cap \B_n)Ê\big|\le D^\infty(\mu,\nu)
\]
owing to~\eqref{eq:alterdicunif}.

If $\ell=0$,   $\big| \mu(\B_n)Ê- \nu(\B_n)Ê\big|\le \min(\mu(\B_n)Ê+ \nu(\B_n), D^\infty(\mu,\nu)\big)$.

Consequently, if we set 
$$
m= \frac 12 (\mu+\nu) \quad\mbox{ and } \quad M_m(q) = \int_{\R^d} |\xi|_\infty^q m(\d\xi)= \frac 12\Big(  \int_{\R^d} |\xi|_\infty^q\mu(\d\xi)+ \int_{\R^d} |\xi|_\infty^q\nu(\d\xi)\Big),$$ 
one has 
\begin{align}
\nonumber \sum_{F\in {\cal P}_\ell\cap  {\cal B}_n} \big| \mu(2^nF\cap \B_n)Ê- \nu(2^nF\cap \B_n)Ê\big|&\le \min \big( \mu( {\cal B}_n)+\nu( {\cal B}_n), 2^{d(\ell-1)^+}D^\infty(\mu,\nu)\big)\\
\nonumber &  \le \min \big( 2 m( {\cal B}_n), 2^{d\ell}D^\infty(\mu,\nu)\big)\\
 \label{eq:ineqfonda} & \le  \min \big( 2M_m(q)2^{-q(n-1)}, 2^{d\ell}D^\infty(\mu,\nu)\big),
\end{align}
where we used the triangle inequality to establish the left  bound in the min of the first line. 

\smallskip  
\noindent {\sc Step~2} ({\em Technical lemma}).
\begin{lem}\label{lem:tech}
Let $p>0$. Let $t>0$ be fixed and let $L: (0, +\infty)\to \R_+$ be defined by 
\[
L(u):=\sum_{\ell\ge 0} 2^{-p\ell} \min\big(u, 2^{d\ell}D^\infty(\mu,\nu)\big).
\]

The function $L$ satisfies the following upper-bounds depending on $p$ and the dimension $d$ where $C_{p,d}>0$ denotes    a positive constant only depending on $p$, $\b$, $d$ that may vary from line to line.
\begin{itemize}
\item If $p>d $, then 
\[
L(u) \le  C_{p,d}\min\Big(u,D^\infty(\mu,\nu)\Big).
\]
\item  If $p=d$ then,
	\[
	L(u) \le C_{p,d} (1+\left(\log(u/D^\infty(\mu,\nu))\right)_{{+}})D^\infty(\mu,\nu).
	\]
%
\item If $p < d $, then
$$
L(u) \le C_{p,d} \Big(u^{1-\frac pd}D^\infty(\mu,\nu)^{\frac pd}\mbox{\bf 1}_{\{u> D^\infty(\mu,\nu)\}}+ u\mbox{\bf 1}_{\{u\le D^\infty(\mu,\nu)\}}\Big).
$$
\end{itemize}
\end{lem}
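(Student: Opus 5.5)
Write $D=D^\infty(\mu,\nu)$. The plan is to split the series defining $L(u)$ at the crossover index where the two arguments of the minimum are equal. Define $\ell_u$ as the smallest integer $\ell\ge0$ such that $2^{d\ell}D\ge u$. Below $\ell_u$ we bound the minimum by $2^{d\ell}D$, and from $\ell_u$ on we bound it by $u$. This gives
\[
L(u)\le D\sum_{0\le \ell<\ell_u}2^{(d-p)\ell}+u\sum_{\ell\ge \ell_u}2^{-p\ell}
= D\sum_{0\le \ell<\ell_u}2^{(d-p)\ell}+\frac{u\,2^{-p\ell_u}}{1-2^{-p}}.
\]
By construction of $\ell_u$, either $\ell_u=0$ (which happens exactly when $u\le D$), or $2^{d(\ell_u-1)}<u/D\le 2^{d\ell_u}$. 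In the second case $2^{\ell_u}$ is comparable to $(u/D)^{1/d}$ up to a factor $2$. In particular $u2^{-p\ell_u}\le u^{1-\frac pd}D^{\frac pd}$.

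\textbf{Case $u\le D$.} Then $\ell_u=0$ and $L(u)\le u/(1-2^{-p})$. This gives the required bound in all three regimes: for $p>d$, $u=\min(u,D)$; for $p=d$, $u\le D$; for $p<d$, this is the term $u\mathbf 1_{\{u\le D\}}$.

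\textbf{Case $u>D$.}
\begin{itemize}
\item If $p>d$, drop the split altogether: $L(u)\le D\sum_{\ell\ge0}2^{(d-p)\ell}=D/(1-2^{d-p})$. Since here $\min(u,D)=D$, this is the claim.
\item If $p=d$, the first sum equals $\ell_u D$. We have $\ell_u\le 1+\frac{\log(u/D)}{d\log2}$, and the tail term is at most $u^{1-\frac pd}D^{\frac pd}/(1-2^{-p})=D/(1-2^{-p})$. Together these give $C(1+\log(u/D))D$.
\item If $p<d$, the geometric first sum is at most $2^{(d-p)\ell_u}D/(2^{d-p}-1)$. Using $2^{\ell_u-1}<(u/D)^{1/d}$, we get $2^{(d-p)\ell_u}\le 2^{d-p}(u/D)^{1-\frac pd}$, so this term is at most $C u^{1-\frac pd}D^{\frac pd}$. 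The tail term was already bounded by the same quantity.
\end{itemize}

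No step is a real obstacle; the computation is routine. The only points needing care are the edge cases: when $D=0$ one has $L\equiv0$; and the endpoint $\ell_u=0$ must be handled as above. Constants depend only on $p$ and $d$, as the statement requires. The parameter $t$ in the statement plays no role.
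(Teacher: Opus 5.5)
Your proof is correct and follows the same argument as the paper. Both split the series at the crossover index $\lceil \log_2(u/D)/d\rceil$, bound the head by a geometric sum in $2^{(d-p)\ell}$ and the tail by $u\,2^{-p\ell_u}/(1-2^{-p})$, and then use $2^{d(\ell_u-1)}<u/D\le 2^{d\ell_u}$. One small advantage of your version is the threshold $u\le D$ (the paper uses $u<2^{-d}D$): it yields exactly the indicator form of the statement, without silently absorbing the range $2^{-d}D\le u\le D$ into the constant.
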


\begin{proof}  $\triangleright$ If $p>d$, one has
\[
L(u) \le \min\bigg(u\sum_{\ell\ge 0} 2^{-p\ell},D^\infty(\mu,\nu) \sum_{\ell\ge 0} 2^{-(p-d)} \bigg) = \min\Big( \frac{u}{1-2^{-p}}, \frac{D^\infty(\mu,\nu)}{1-2^{-(p-d)}}\Big).
\]
 $\triangleright$  If $p=d$  there are two sub-cases. If $u<2^{-d}D^\infty(\mu,\nu)$, then $L(u) = \frac{u}{1-2^{-p}}$. Otherwise $\ell^*= \left\lceil \frac{\log(u/D^\infty(\mu,\nu))}{d\log 2}\right \rceil\!\in\Big [\frac{\log(u/D^\infty(\mu,\nu))}{d\log 2},  1+ \frac{\log(u/D^\infty(\mu,\nu)}{d\log 2}\Big)$   so that  $\ell^*\ge 0$ and 
\[
L(u) \le \ell^* D^\infty(\mu,\nu) + \frac{2^{-p\ell^*}}{1-2^{-p}} \le \Big(1+\frac{1}{1-2^{-p}} +\frac{\log(u/D^\infty(\mu,\nu))}{d\log 2})\Big) D^\infty(\mu,\nu).
\]

\noindent   $\triangleright$ If $p < d $ either  $u< 2^{-d}D^\infty(\mu,\nu)$ and  $L(u) = \frac{u}{1-2^{-p}}$. Otherwise $\ell^*$ defined as above  is nonnegative and
\begin{align*} 
L(u)  & =\sum_{\ell=0}^{\ell^*-1} 2^{-p\ell} 2^{d\ell}D^{\infty}(\mu,\nu) + \sum_{\ell\ge \ell^*} 2^{-p\ell}u\\
	&  =  D^{\infty}(\mu,\nu) \frac{2^{(d-p)\ell^*}-1}{2^{d-p}-1}+ u \frac{2^{-p\ell^*}}{1-2^{-p}}\\
	& \le  D^{\infty}(\mu,\nu) \frac{2^{(d-p)(1+\log(u/D^\infty(\mu,\nu))} }{2^{d-p}-1} + u\, 2^{-p} \frac{2^{-p\frac{\log(u/D^\infty(\mu,\nu))}{d\log 2}}}{1-2^{-p}}\\
	&\le \Big( \frac{1}{1-2^{-(d-p)}} + \frac{1}{2^p-1} \Big)\big(D^\infty(\mu,\nu)\big)^{\frac pd}u^{1-\frac pd}.
\end{align*}
\end{proof}

\noindent {\sc Step~3}. It follows from \eqref{eq:W-major}, Step~1 and the definition of the function $L$ that
\begin{align*}
{\cal W}_p^p(\mu,\nu)& \le K_{p,d} \sum_{n\ge 0}2^{pn} \sum_{\ell\ge 0}2^{-p\ell}\sum_{F\in {\cal P}_{\ell}}\big| \mu(2^nF\cap \B_n)Ê- \nu(2^nF\cap \B_n)Ê\big|\\
& \le K_{p,d} \sum_{n\ge 0}2^{pn} \sum_{\ell\ge 0}2^{-p\ell}\sum_{F\in {\cal P}_{\ell}} \min \big( 2^{1+q}M_m(q)2^{-qn}, 2^{d\ell}D^\infty(\mu,\nu)\big)\\
&\le K'_{p,d}(M_m(q)\vee1) \sum_{n\ge 0}2^{pn}\sum_{\ell\ge 0}2^{-p\ell}\sum_{F\in {\cal P}_{\ell}} \min \big(2^{-qn}, 2^{d\ell}D^\infty(\mu,\nu)\big)\\
& = K'_{p,d}(M_m(q)\vee1) \sum_{n\ge 0}2^{pn}L\big(2^{-qn},D^\infty(\mu,\nu)\big).
\end{align*}
Now we inspect the usual three cases. The  letters $C$ and $c$ denote positive constants depending only on its indices that may vary from line to line.

\smallskip
\noindent  $\triangleright$ If $p>d$, it follows from Proposition~\ref{prop:upperWunbounded} and Lemma~\ref{lem:tech} that 
\begin{align*} 
{\cal W}_p^p(\mu,\nu) &\le C'_{p,q,d}\sum_{n\ge 0}2^{np} \min\big(2^{-qn}, D^\infty(\mu,\nu)\big)\\
&  =  C'_{p,q,d} \Big(n^\star D^\infty(\mu,\nu)+\sum_{n\ge n^\star} 2^{n(p-q)}\Big)
\end{align*}
where $n^\star = \left\lceil -\frac{\log(D^\infty(\mu,\nu))}{q\log 2} \right\rceil\ge 1$.
Hence
\begin{align*} 
{\cal W}_p^p(\mu,\nu) &\le C_{p,q,d}\bigg(\frac{2^{-n^\star(q-p)}}{1-2^{-(q-p)}} +\frac{1}{q\log2}\log(1/D^\infty(\mu,\nu))D^\infty(\mu,\nu)\bigg)\\
				  &\le C_{p,q,d}\bigg(\frac{(D^\infty(\mu,\nu))^{1-\frac pq}}{1-2^{-(q-p)}} +\frac{1}{q\log2}\log(1/D^\infty(\mu,\nu))D^\infty(\mu,\nu)\bigg)\\
				  & \le C_{p,q,d} D^\infty(\mu,\nu)^{1-\frac pq}.
\end{align*}

\noindent  $\triangleright$ If $p<\frac{qd}{q+d}$  then it follows Proposition~\ref{prop:upperWunbounded} and Lemma~\ref{lem:tech} that 
\begin{align*} 
{\cal W}_p^p(\mu,\nu) &\le C_{p,q,d} \sum_{n\ge 0} 2^{(p-q(1-\frac pd))n} D^\infty(\mu,\nu)^{\frac pd} \mbox{\bf 1}_{\{c_{p,q,d}2^{-qn}>D^\infty(\mu,\nu)\}}+ 2^{-(q-p)n} \mbox{\bf 1}_{\{c_{p,q,d}2^{-qn}\le D^\infty(\mu,\nu)\}}\\
& = C_{p,q,d}\bigg(\sum_{n=0}^{n^\star-1}  2^{(p-q(1-\frac pd))n} D^\infty(\mu,\nu)^{\frac pd} + \frac{2^{-(q-p)n^\star}}{1-2^{-(q-p)}}\bigg)\\
&= C_{p,q,d} \bigg(\frac{2^{(p-q(1-\frac pd))n^\star}-1}{2^{(p-q(1-\frac pd)}-1} D^\infty(\mu,\nu)^{\frac pd}  +2^{-(1-\frac pd)d} (D^\infty(\mu,\nu))^{1-\frac pq} \bigg)
\end{align*}
with $n^\star= \left\lceil \frac{\log(c_{pq,d}/D^\infty(\mu,\nu))}{q\log 2}\right\rceil$. Hence, if $p< \frac{qd}{q+d}$ then $p-q(1-\frac pd)<0$ so that 
\[
{\cal W}_p^p(\mu,\nu) \le C_{p,q,d}  \big(D^\infty(\mu,\nu)^{\frac pd} +  D^\infty(\mu,\nu)^{1-\frac pq} \big)\le C^{(3)}_{p,q,d}D^\infty(\mu,\nu))^{\frac pd} 
\] 
since $\frac pd \le 1-\frac pd$ and $D^\infty(\mu,\nu) \le 1$. 

If $p> \frac{qd}{q+d}$ then $p-q(1-\frac pd)>0$ and one easily checks using that $n^\star < 1+\frac{\log(c_{pq,d}/D^\infty(\mu,\nu))}{q\log 2}$ that
\[
{\cal W}_p^p(\mu,\nu) \le C_{p,q,d}   D^\infty(\mu,\nu)^{1-\frac pq} .
\]
\hfill $\Box$

\subsection{Bounding the $K$--$S$-distance by {the} $L^1$-Wasserstein distance}\label{subsec:KSvsW_1unbound}
Let us denote by $\|u\|_{\infty}= \max_{i=1,\ldots,d} |u^i|$ the $\ell^\infty$-norm and let us define, for every $A \subset \R^d$ and $x\!\in \R^d$, $d_\infty(x,A) =\inf _{a\in A} |x-a|_{\infty}$. The key property of this section is the Monge-Kantorovich representation of the $L^1$- Wasserstein distance, namely
\begin{equation}\label{eq:MKrep}
{\cal W}^{\ell^{\infty}}_1(\mu, \nu)= \sup \bigg\{\int fd\mu -\int f d\nu, f\!\in {\rm Lip}(\R^d, \R),\, \mbox{ with } [f]_{\rm Lip}\le 1\bigg\},
\end{equation}
where $[f]_{\rm Lip}:=\sup_{x,y\in \R^d}\frac{|f(x)-f(y)|}{|x-y|_{_\infty}}$.

{Having in mind that the topology induced by the $K$--$S$ distance is{ finer}  than that induced by  ${\cal W}_1$, as emphasized by the counterexample (see \eqref{eq:counterexampleb}),  we need an additional assumption on one of the two probability measures under consideration to  bound  the first distance  by the second one. Thus, we will assume  in the proposition below that at least one of the two measures is absolutely continuous with respect to the Lebesgue measure.}

\begin{thm}[Bounding  star discrepancy discrepancy by $L^1$-Wasserstein distance]\label{thm:boundDbyW} Let $\nu $ be an absolutely  continuous distribution on $\R^d$
with density $g\!\in {\cal L}^{\frac{r}{r-1}}(\R^d, \lambda_d)$ for some $r\!\in (1,+\infty]$ and finite first moment. Then, for every probability distribution  $\mu$ on $\R^d$with  finite first moment,
\begin{equation}\label{eq:DstarleW1}
D^\star(\mu,\nu)\le C_{r,d}{\cal W}^{\ell^\infty}_1(\mu,\nu)^{\frac{d}{r+d}} \big \| g\big\|^{\frac{r}{r+d}}_{{\cal L}^{\frac{r}{r-1}}(\lambda_d)}.
\end{equation}
with $C_{r,d} =  \begin{pmatrix}r+d \\ r \end{pmatrix}^{-\frac{1}{r+d}}\bigg( \Big(\frac dr\Big)^{\frac{r}{r+d}} + \Big(\frac rd\Big)^{\frac{d}{r+d}} \bigg)$ for $r\ge 1$.
If $g$ is bounded, one has:
\[
D^\star(\mu,\nu)\le C_{1,d}{\cal W}^{\ell^\infty}_1(\mu,\nu)^{\frac{d}{d+1}} \big\| g\big\|^{\frac{1}{d+1}}_{\infty}.
\]
\end{thm}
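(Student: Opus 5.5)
Fix $x\in\R^d$ and set $A=]\!]-\infty,x]\!]$. The plan is a smoothing argument: approximate $\mathbf 1_A$ from above and below by Lipschitz functions, and control the smoothing error with the density $g$ via Hölder's inequality. For $\varepsilon>0$ define
\[
f^+_\varepsilon(u)=\Big(1-\tfrac{d_\infty(u,A)}{\varepsilon}\Big)_+ ,\qquad f^-_\varepsilon(u)=\min\Big(1,\tfrac{d_\infty(u,A^c)}{\varepsilon}\Big).
\]
Both functions are $\frac1\varepsilon$-Lipschitz for $|\cdot|_\infty$ and satisfy $f^-_\varepsilon\le \mathbf 1_A\le f^+_\varepsilon$. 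By \eqref{eq:MKrep} applied to $\varepsilon f^\pm_\varepsilon$,
\[
\mu(A)-\nu(A)\le \int f^+_\varepsilon\,d\mu-\int f^+_\varepsilon\, d\nu+\int (f^+_\varepsilon-\mathbf 1_A)\,d\nu\le \frac{{\cal W}_1^{\ell^\infty}(\mu,\nu)}{\varepsilon}+\nu(A^{\varepsilon}\setminus A),
\]
where $A^\varepsilon=]\!]-\infty,x+\varepsilon\mathbf 1]\!]$. Symmetrically, $\nu(A)-\mu(A)\le \frac{{\cal W}_1}{\varepsilon}+\nu(A\setminus A_{-\varepsilon})$, where $A_{-\varepsilon}=]\!]-\infty,x-\varepsilon\mathbf 1]\!]$.

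The main obstacle is bounding the boundary layers $A^\varepsilon\setminus A$ and $A\setminus A_{-\varepsilon}$: they have infinite Lebesgue measure, so Hölder cannot be applied to them directly. I would first try to handle this by truncation combined with the moment assumption, or better, by replacing the orthant with a box. By \eqref{eq:dinftyvsdstar} and \eqref{eq:alterdicunif} it suffices, at the cost of constants, to bound $|\mu(B)-\nu(B)|$ for bounded boxes $B$ and then let the box grow, since orthant measures are monotone limits of box measures. The layer $B^\varepsilon\setminus B$, however, still has measure depending on the side lengths of $B$.

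The alternative I expect to work, and which matches the exponent $\frac d{r+d}$ and the binomial constant, is to smooth the layer only on the $\nu$ side with a layer whose $\nu$-mass is controlled by $\|g\|_{{\cal L}^{r/(r-1)}}$ times $\lambda_d(\text{layer})^{1/r}$, where the layer is taken to be a \emph{cube-shaped} region of volume $\propto\varepsilon^{d}$. Concretely, replace the linear ramp $f^+_\varepsilon$ by a product-type ramp $\prod_i(1-(u^i-x^i)_+/\varepsilon)_+$ truncated, whose excess over $\mathbf 1_A$ has integral against $g$ bounded by Hölder:
\[
\int (f-\mathbf 1_A)g\,d\lambda_d\le \|g\|_{{\cal L}^{r/(r-1)}}\Big(\int (f-\mathbf 1_A)^r\,d\lambda_d\Big)^{1/r}\sim \|g\|\,\varepsilon^{d/r}\,c_{r,d},
\]
with $c_{r,d}$ coming from a Beta-type integral, which is where $\binom{r+d}{r}^{-1}$ appears. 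For $r=\infty$, i.e.\ $g\in{\cal L}^1$, this degenerates, consistent with the bounded case being $r=1$ in the paper's convention; I would reconcile the exponent conventions by writing the bound as $\|g\|\,\varepsilon^{d/r}$ uniformly.

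Finally, optimize $\frac{W}{\varepsilon}+K\varepsilon^{d/r}$ over $\varepsilon$. The minimizer is $\varepsilon^\star=(rW/(dK))^{r/(r+d)}$, giving a value of
\[
\Big(\big(\tfrac dr\big)^{\frac r{r+d}}+\big(\tfrac rd\big)^{\frac d{r+d}}\Big)W^{\frac d{r+d}}K^{\frac r{r+d}},
\]
which produces exactly the stated $C_{r,d}$ once $K$ absorbs the Beta constant. Taking the supremum over $x$ concludes. In the bounded case, $\nu(\text{layer})\le\|g\|_\infty\lambda_d(\text{layer})$ with $\lambda_d\sim\varepsilon^d$ gives exponent $\frac d{d+1}$ after the same optimization. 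The step I expect to be genuinely delicate is making the layer volume finite, i.e.\ scale like $\varepsilon^d$, for unbounded orthants; if the product ramp fails, I would fall back on truncation using the first moments.
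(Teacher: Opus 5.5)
\textbf{The product ramp fails.} For $u\notin A$ one has $\prod_i\bigl(1-(u^i-x^i)_+/\varepsilon\bigr)_+>0$ if and only if $u^i<x^i+\varepsilon$ for all $i$. So its excess over $\mathbf 1_A$ lives on the same unbounded layer $\{u:\,u^i<x^i+\varepsilon\ \forall i\}\setminus A$ as that of $f^+_\varepsilon$. For instance, it equals $\frac12$ at $(x^1+\frac\varepsilon2,x^2-t,\ldots,x^d-t)$ for every $t>0$.

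No other test function can do better. If $f\ge\mathbf 1_A$ is $\frac1\varepsilon$-Lipschitz for $|\cdot|_\infty$, then $f(u)\ge f(a)-|u-a|_\infty/\varepsilon\ge 1-|u-a|_\infty/\varepsilon$ for every $a\in A$. Hence $f\ge f^+_\varepsilon$, and $f-\mathbf 1_A>\frac12$ on $\{x^1<u^1<x^1+\frac\varepsilon2,\ u^j\le x^j \mbox{ for } j\ge2\}$, which has infinite Lebesgue measure once $d\ge2$. So H\"older can never produce a factor $\varepsilon^{d/r}$. A truncation based on first moments cannot rescue it either, since the target bound contains no moment term.

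\textbf{The obstacle is fatal: the statement is false for $d\ge2$.} Let $\nu$ be uniform on $[0,\varepsilon]\times[0,1]^{d-1}$, so $\|g\|_{\mathcal L^{r/(r-1)}(\lambda_d)}=\varepsilon^{-1/r}$ (and $\|g\|_\infty=\varepsilon^{-1}$, the case $r=1$). Let $\mu$ be its image under $u\mapsto(0,u^2,\ldots,u^d)$, and take $x=(0,1,\ldots,1)$. Then $\mathcal W_1^{\ell^\infty}(\mu,\nu)\le\int u^1\,\nu(du)=\frac\varepsilon2$. On the other hand $\mu(]\!]-\infty,x]\!])=1$ and $\nu(]\!]-\infty,x]\!])=0$, so $D^\star(\mu,\nu)=1$. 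The claimed right-hand side is at most $C_{r,d}\,2^{-\frac d{r+d}}\varepsilon^{\frac{d-1}{r+d}}$, which tends to $0$.

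The same happens for a fixed nice density. Take $\mathcal U([0,1]^2)$ and $x=(\frac12,1)$, and push the strip $\{\frac12<u^1\le\frac12+\varepsilon\}$ horizontally onto $\{u^1=\frac12\}$. This gives $D^\star\ge\varepsilon$ with $\mathcal W_1^{\ell^\infty}\le\varepsilon^2/2$, contradicting the exponent $\frac23$ of the bounded case.

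\textbf{Relation to the paper.} Your skeleton is exactly the paper's: Lipschitz sandwich, Kantorovich--Rubinstein, H\"older with the Beta integral, then optimization in $\varepsilon$. The paper gets past the layer only by asserting that $f_{x,\varepsilon}-f_x$ vanishes outside the cube $\prod_i[x^i,x^i+\varepsilon]$, which holds only for $d=1$. So the argument proves the theorem for $d=1$, where the layer is $(x,x+\varepsilon]$ and $\int_0^\varepsilon(1-t/\varepsilon)^r\,dt=\frac{\varepsilon}{r+1}$.

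For $d\ge2$ it is the statement that must change, not the proof. For instance, if the one-dimensional marginal densities $g_i$ of $\nu$ are bounded, then $\nu(A^\varepsilon\setminus A)\le\varepsilon\sum_i\|g_i\|_\infty$. This yields a bound with exponent $\frac12$.
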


\noindent {\bf Remark.} Note the  case of a bounded density corresponds to $r=1$ and is consistent with the general formula for $r>1$. Also note  that this constant  goes to $1$ as $d\to \infty$.

\begin{proof} Let $x\!\in [0,1]^d$,  let $f_x= \mbox{\bf 1}_{]\!]-\infty,x]\!]}$ and, for every $\ve >0$, $f_{x,\ve} = \Big(1-\frac{d_\infty(\cdot, ]\!]-\infty,x]\!])}{\ve} \Big)^+$ and $\tilde f_{x,\ve} = \Big(1-\frac{d_\infty(\cdot, ]\!]-\infty,x-\ve \mathbf{1}]\!])}{\ve} \Big)^+$ (with the convention on boxes).  The functions $f_{x,\ve}$
and  $\tilde f_{x,\ve} $ are clearly $\frac{1}{\ve}$-Lipschitz for the $\ell^\infty$-norm. It is clear that $\tilde f_{x,\ve } \le  f_x\le f_{x,\ve}$.

Let us compute $d_\infty(u,]\!]-\infty,x]\!])$ for every $u\!\in \R^d\setminus ]\!]-\infty,x]\!]$. First, note that he continuous  convex function $\varphi_u:y\mapsto |u-y|_\infty$  attains its minimum on the boundary of  the box $]\!]-\infty,x]\!]$ namely
\[
\partial ]\!]-\infty,x]\!]= \bigcup_{1\le i\le d}\prod_{1\le j \le i-1} \{x^j\}\times (-\infty, x^i]\times \prod_{i+1\le j \le d} \{x^j\}.
\]
First note that $\inf_{y\in ]\!]-\infty, x]\!]}\varphi_i(y)  = \min _{y\in B_{\ell^\infty}(x, 2|u-x]_{_\infty})}|u-y|_{_\infty}$ hence ${\rm argmin} \, \varphi_u$ is nonempty. If not included in  $\partial ]\!]-\infty,x]\!$,  let $y^*\in {\rm argmin}\,\varphi_u\cap ]\!]-\infty,x[\![$  and let $g(t)= \varphi_u(tu+(1-t)y^*$), $t\in [0,1]$. The function $g$ is nonnegative, continuous and  convex and  there exists $\eta>0$ such that $u+(1-t)y^*\!\in  ]\!]-\infty,x[\![$ for $t\!\in (0,\eta]$. Hence the right derivative $g'_r(0)\ge 0$ by definition of $y^*$. As $g$ is convex, it is also  non-decreasing. Noting that $g(1)= 0$ implies that $g$ is identically $0$. Then $g(0)=0$ so that  $u= y^*$ which is impossible since $u$ does not belong  to $]\!]-\infty,x]\!]$. 
Hence, one easily checks that  
$$
d_\infty(u,]\!]-\infty,x]\!]) = \min_{i=1,\ldots,d} (x ^i-u^i)^+.
$$
Then 
\begin{align}
\nonumber \mu(]\!]-\infty,x]\!])-\nu(]\!]-\infty,x]\!]) &= \int \underbrace{(f_x-f_{x,\ve} )}_{\le 0}d\mu + \int f_{x,Ê\ve}d\mu -\int f_{x,Ê\ve}d\nu + \int (f_{x,\ve} -f_x)d\nu\\
\label{eq:ineqmunu1} & \le \frac{1}{\ve}{\cal W}_1^{\ell_\infty}(\mu,\nu)  + \int (f_{x,\ve} -f_x)gd\lambda_d
\end{align}
owing to~\eqref{eq:MKrep}.  Now it follows from the expression for $d_\infty(u,]\!]-\infty,x]\!])$ that, for every $u\!\in [0,1]^d$,
\begin{align*}
0\le f_{x,\ve}(u) -f_x(u) &= \bigg(1- \frac{d_{\infty}(u, ]\!]-\infty,x]\!])}{\ve} \bigg)^+ 
\mbox{\bf 1}_{]\!]-\infty,x]\!]^c}(u)\\
&=  \bigg(1- \frac{d_\infty(u, ]\!]-\infty,x]\!])}{\ve} \bigg) \mbox{\bf 1}_{[\![x,x+\ve \mathbf{1}]\!]^c}(u)\\
&\le  \bigg(1- \frac{\max_{i=1,\ldots,d}(u^i -x^i)}{\ve} \bigg) \mbox{\bf 1}_{x^i\le u^i\le x^i +\ve , i=1\ldots d}.
\end{align*}
Assume $r\!\in (1, +\infty)$. It follows from H\"older inequality that 
\begin{equation}\label{eq:majorfx}
\int (f_{x,\ve} -f_x)d\nu\le \bigg(\int_{ \prod_i[x^i, x^i+\ve]} \bigg[\Big(1- \frac{\max_{i=1,\ldots,d}(u^i -x^i)}{\ve} \Big)\bigg]^rdu\bigg)^{\frac 1r} \big\|g\big\|_{{\cal L}^{\frac{r}{r-1}}(\lambda_d)} .
\end{equation}
Now 
\begin{align*}
\int_{[0,1]^d \cap \prod_i[x^i, x^i+\ve]} \bigg[\Big(1- \frac{\max_{i=1,\ldots,d}(u^i -x^i)}{\ve} \Big)\bigg]^rdu &\le \int_{ \prod_i[x^i, x^i+\ve]} \bigg[\Big(1- \frac{\max_{i=1,\ldots,d}(u^i -x^i)}{\ve} \Big)\bigg]^rdu\\
&=\ve^d \int_{[0,1]^d} \bigg[\Big(1- \max_{i=1,\ldots,d}v^i \Big)\bigg]^rdv\\
 & = \ve^d 	\int_{[0,1]^d} (\min_{i=1,\ldots,d}w^i)^rdw \\
 &= \ve^d d!\int_{0<w^1<\cdots<w^d<1} (w^1)^rdw\\
&=\ve^d d!\frac{r!}{(r+d)!}	= 	\ve^d\begin{pmatrix}d+r\\ r	\end{pmatrix}^{-1}.														 
\end{align*}
Inserting this in~\eqref{eq:majorfx} and then in~\eqref{eq:ineqmunu1} yields
\[
 \mu(]\!]-\infty,x]\!])-\nu(]\!]-\infty,x]\!]) \le \frac{1}{\ve}{\cal W}^{\ell^\infty}_1(\mu,\nu)  +  \ve^{\frac dr}\begin{pmatrix}d+r\\ r	\end{pmatrix}^{-\frac 1r}\big\|g\big\|_{{\cal L}^{\frac{r}{r-1}}(\lambda_d)}.
\]
one shows likewise using $\tilde f_{x,\ve}$ that $ \nu(]\!]-\infty,x]\!])-\mu(]\!]-\infty,x]\!])$ satisfies  the same inequality since $\tilde f_{x,\ve}-f_x\le 0$.  Consequently, for every $\ve  >0$,
\[
D^\star(\mu,\nu)\le  \frac{1}{\ve}{\cal W}^{\ell^\infty}_1(\mu,\nu)  +  \ve^{\frac dr}C_{r,d}\big\|g\big\|_{{\cal L}^{\frac{r}{r-1}}(\lambda_d)}.
\]
with $C_{r,d}=\begin{pmatrix}d+r\\ r	\end{pmatrix}^{-\frac 1r}$.
One concludes by setting $\displaystyle \ve = \Big(\frac{{\cal W}^{\ell^\infty}_1(\mu,\nu)}{\|g\|_{{\cal L}^{\frac{r}{r-1}}(\lambda_d)}}\frac{r}{dC_{r,d}} \Big)^{\frac{r}{d+r}}$ at which the above function of $\ve$ attains its minimum. The case $r=1$ follows likewise.
 \end{proof}

\noindent {\bf Remarks.} {$\bullet$ Assume $d\ge 2$. Based on~\eqref{eq:boundWpbyKS} and~\eqref{eq:DstarleW1} with  $\nu= g\cdot\lambda_d$,  with $g\!\in {\cal L}^{\frac{r}{r-1}}(\lambda_d)$ for some $r\ge 1$, one easily deduces that for $q>\frac{d}{d-1}$ and every $\ve >0$ small enough
\[
\Big( M_{\frac{\mu+\nu}{2}}(q) \vee 1\Big)^{-1} {\cal W}_1(\mu,\nu)  \preceq D^\star(\mu,\nu) ^{\frac 1d}\preceq \big \| g\big\|^{\frac{r}{d(r+d)}}_{{\cal L}^{\frac{r}{r-1}}(\lambda_d)} {\cal W}_1(\mu,\nu)^{\frac{1}{r+d}},
\] 
where $\preceq$ stands for ``lower up to a constant'' (possibly depending on $r$, $q$, $d$).  This  suggests that this upper-bound is not sharp, having in mind that if $\mu_n= \frac 1n \sum_{k=1}^n \delta_{\frac{2k-1}{2n}}$ and $\nu = {\cal U}([0,1])$, then, for every $n\ge 1$, 
\[
{\cal W}_1(\mu_n ,\nu)= D^{\star}(\mu_n,\nu)= \frac{1}{2n}.
\] 
}

\noindent $\bullet$ In~\cite{GauLi}, a similar upper bound is established for more general  ``smooth Wasserstein'' distances  $d_m$ defined {by} $ d_m(\mu,\nu)=\sup_{f\in {\cal H}_m}|\mu(f)-\nu(f)|$
where ${\cal H}_m$ is the space of  $m-1$ differentiable functions with $1$-Lipschitz partial derivatives or order $m-1$ in the case where the  distribution has a bounded density. The above appears as an extension of the setting $m= r=1$  to $m=1$ and $r\!\in [1,+\infty)$.\\

 \noindent \textbf{Fundings.}  The first author benefited for this research of the support of the ``Chaire Risques Financiers'', Fondation du Risque.
 The second author thanks
the Henri Lebesgue Center (ANR-11-LABX-0020-01) and the ANR
project RAWABRANCH (ANR-23-CE40-0008).
\bibliographystyle{alpha}
\bibliography{bib_VTergo}

\appendix
\section{ Proof of Lemma~\ref{lem:tech1}.}\label{proof:lemma:2point1}
$(a)$ Let $x$, $y \!\in [0,1]^d$, $x\preceq y$  and let $x_n\to x$ such that  $x^i<x^i_n$ and $x^i_n\downarrow x^i$ for every  $i=1,\ldots,d$ such that $x^i<1$ (if $x^i=1$ for some $i$ then $y^i=1$ so that $\mu( ]\!]x,y]\!])= \nu( ]\!]x,y]\!]) =0$). We set $\tilde x^i_n = x^i_n\wedge y^i$. It is clear that
$[\![\tilde x_n,y]\!]\uparrow\, ]\!]x,y]\!]$ for the inclusion so that $\mu\big([\![\tilde x_n,y]\!]\big)\uparrow \mu( ]\!]x,y]\!])$. Idem for $\nu$. Hence
\[
|\mu( ]\!]x,y]\!])- \nu( ]\!]x,y]\!])|= \lim_n |\mu( [\![\tilde x_n,y]\!])- \nu( [\![\tilde x_n,y]\!])|\le D^{\infty}(\mu,\nu)
\]
from which we derive the announced statement.

\noindent $(b)$  Let $x$, $y \!\in [0,1]^d$, $x\preceq y$ and let $x_n\to x$ so that $x^i_n<x^i$ and  $x^i_n\uparrow x^i$ for every, $i=1,\ldots,d$  such that $x^i\neq 0$ and $x_n^i=0$ if $x^i=0$. Then
\[
]\!]x_n ,y]\!]\, \downarrow\,   K_{x,y} \mbox{ such that } K_{x,y}\cap (0,1]^d= [\![x,y]\!] \cap (0,1]^d.
\]
Then $\mu\big (]\!]x_n ,y]\!] \big)\downarrow \mu(K_{x,y})= \mu\big(K_{x,y}\cap (0,1]^d\big)= \mu\big([\![x,y]\!]\big)$. The same for $\nu$. Consequently
\[
|\mu( [\![x,y]\!])- \nu([\![x,y]\!])|= \lim_n |\mu(]\!] x_n,y]\!])- \nu(]\!] x_n,y]\!])|\le  \sup_{x,\, y\in [0,1]^d} \big| \mu\big(]\!]x,y]\!]\big) -  \nu\big(]\!]x,y]\!]\big)\big|.
\]
Combined with Claim~$(a)$ this completes the proof.\\

\noindent  (c) With the notations of $(b)$, one   checks that the sequence  $(]\![x_n,y]\!])_n$ decreases to $[\![x,y]\!]$ so that, with  the same monotone convergence argument as in $(b)$, one obtains that 
\[
D^{\infty}(\mu,\nu) \le  \sup_{x,\, y\in [0,1]^d, x\preceq y} \big| \mu\big(]\![x,y]\!]\big) -  \nu\big(]\![x,y]\!]\big)\big|,
\]
For the reverse inequality, we adapt $(a)$ by assuming that  $x_n^i=0$ when $x^i=0$. In this case, the sequence $[\![\tilde x_n,y]\!]\uparrow\, ]\![x,y]\!]$ and the sequel is identical to $(a)$.\hfill$\Box$

\end{document}